\documentclass[a4paper,11pt]{article}
\usepackage{amsthm,amsmath}
\usepackage{amssymb}
\RequirePackage[numbers]{natbib}
\RequirePackage{hyperref}
\usepackage{mathtools}
\usepackage{graphicx}
\usepackage{color}
\usepackage{tabularx}
\usepackage{float}
\usepackage{array}
\usepackage{multirow}

\newcommand{\mathd}{\mathrm{d}}
\newcommand{\mathe}{\mathrm{e}}

\newcommand{\esp}{\mathbb{E}}

\numberwithin{equation}{section}
\theoremstyle{plain}
\newtheorem{definition}{Definition}[section]
\newtheorem{proposition}[definition]{Proposition}
\newtheorem{lemma}[definition]{Lemma}
\newtheorem{theorem}[definition]{Theorem}
\newtheorem{corollary}[definition]{Corollary}
\newtheorem{remark}[definition]{Remark}

\newtheorem{assum}[definition]{Assumption}

\usepackage{graphicx}

\begin{document}
\title{Generalized Pareto Regression Trees for extreme events analysis}
\author{S\'ebastien F\textsc{arkas}$^1$, Antoine H\textsc{eranval}$^{1,2}$,  Olivier L\textsc{opez}$^1$, \\and  Maud T\textsc{homas}$^1$\\
 }

\date{}

\maketitle

\begin{center} 
{\small$^1$ Sorbonne Universit\'e, CNRS, Laboratoire de Probabilit\'es, Statistique et Mod\'elisation, \\LPSM, 4 place Jussieu, F-75005 Paris, France,\\
$^2$ Mission Risques Naturels, 1 rue Jules Lefebvre 75009 Paris, France\\
E-mails : sebastien.farkas@sorbonne-universite.fr, \\
antoine.heranval@mrn.asso.fr, \\maud.thomas@sorbonne-universite.fr, \\olivier.lopez@sorbonne- universite.fr}
\end{center}

\begin{abstract}
In this paper, we provide finite sample results to assess the consistency of Generalized Pareto regression trees, as tools to perform extreme value regression. The results that we provide are obtained from concentration inequalities, and are valid for a finite sample size, taking into account a misspecification bias that arises from the use of a ``Peaks over Threshold'' approach. The properties that we derive also legitimate the pruning strategies (i.e. the model selection rules) used to select a proper tree that achieves compromise between bias and variance. The methodology is illustrated through a simulation study, and a real data application in insurance against natural disasters.
\end{abstract}

\noindent{\bf Key words:} Extreme value theory; Regression trees; Concentration Inequalities; Generalized Pareto Distribution.

 \section{Introduction}

Extreme value theory (EVT) is the branch of statistics which has been developed and broadly used to handle extreme events, such as extreme floods, heat waves episodes or extreme financial losses \citep{katz2002statistics, embrechts2013modelling}. One of the key results behind the success of this approach was proved by Balkema and de Haan in  \citep{balkema1974residual}: they show that the tail of the distribution of a series of observations can be approximated by a parametric family of distributions, namely Generalized Pareto (GP) distributions. This property allows the statistician to find information from the largest observations of a random sample to extrapolate the tail. This yields the so-called Peaks over Threshold (PoT) method introduced in \citep{smith1984threshold} which consists in fitting a GP distribution to the excesses above some (high) suitably chosen threshold. In a regression framework, the parameters of this GP distribution depend on covariates reflecting the fact that different values of these covariates may result in a different tail behavior of the response variable. In this paper, we study the use of regression trees to perform GP regression on the excesses. This ensemble method, introduced by \citep{breiman}, determines clusters of similar tail behaviors depending on the value of the covariates, based on a recursive partition of the sample and simple model selection rules. In the present work, we provide theoretical results and empirical evidence on the consistency of such a procedure and of these selection rules. The result we provide are based on concentration inequalities, in order to hold for finite sample sizes. The main difficulty stands in the misspecification of the model and on handling the fact that the distributions are heavy tailed.

Tail regression is a challenging task. Several papers have been interested in extreme quantile regression, to name a few, in 2005, Chernozhukov \citep{chernozhukov2005extremal} and, in 2012, Wang et al. \citep{wang2012estimation} derive extreme quantile estimators assuming a linear form for the conditional quantile. In 2019, Gardes and Stupfler \citep{gardes2019integrated} and Velthoen et al. \citep{velthoen2019improving} use conditional intermediate-level quantiles to extrapolate above the threshold and deduce estimators for extreme conditional quantiles. Another approach is to model the parameters of the GP distribution of functions of the covariates e.g. as local polynomials \citep{beirlant2} or as generalized additive models \citep{ChavezDemoulin2015}. Very recently, in 2021, Velthoen et al. \citep{velthoen2021gradient} proposed to a gradient boosting procedure to estimate conditional GP distribution. Let us note that the nonparametric approaches rely on regularity assumptions on the way the tail of the distribution evolves with the covariates (which are required to be continuous through the use of kernel smoothing). A nice feature of the regression tree approach we consider in the present paper is its ability to handle several covariates which components may be either discrete or continuous. Moreover, this method is adapted to situations where the tail behavior is supposed to be significantly different depending on the characteristics, as for example it is the case in an application to cyber-insurance considered in a former paper, see \citep{farkas2021cyber}.

Regression trees, introduced by Breiman \citep{breiman} along with the CART algorithm (for Clustering And Regression Trees), are flexible tools to perform a regression and clustering task simultaneously. They have been used in various fields, including industry \citep{cartindustry}, geology (see e.g. \citep{cartgeology}), ecology (see e.g. \citep{carteco}), claim reserving in insurance \citep{lopez2016}. Through the iterative splitting algorithm used in CART, nonlinearities are introduced in the way the distribution is modeled, while furnishing an intelligible interpretation of the final classification of response variables. The splitting criterion---used to iteratively separate observations into clusters of similar behaviors---depends on the type of problems one is considering. While the standard CART algorithm relies on mean-squared criterion to perform mean-regression, alternative loss functions have been considered as in \citep{chaudhuri2002} for quantile regression, or in \citep{SU_MLRT} who used a log-likelihood based loss. Loh \citep{loh1,loh2} provide detailed descriptions of regression trees procedures and a review of their variants. In this paper, building on the Balkema and de Haan result, we use a GP log-likelihood loss, as in \citep{farkas2021cyber}, to perform extreme value regression. 

The rest of the paper is organized as follows. In Section \ref{sec_methods}, we introduce notations and describe the GP regression tree algorithm. Section \ref{sec_main} lists the main results of this paper, that is deviation bounds for the regression tree estimator for finite sample size, and consistency of the ``pruning'' (that is model selection) strategy. Empirical results are gathered in Section \ref{sec_simu}, which provides a simulation study, and a real data analysis in natural disaster insurance. Detailed proofs of the technical results are shown in the Appendix.

\section{Regression trees for extreme value analysis}
\label{sec_methods}

This section describes the estimation method (GP regression trees) that is considered in the paper. Some classical results in EVT are given in Section \ref{sec_evtreg} to motivate the GP approximation. Regression trees adapted to this context are described in Section \ref{sec_GPDtrees}. A short discussion on the advantage of this technique compared to competing approaches is developed in Section \ref{subsec:compete}.

\subsection{Extreme value theory and regression}
\label{sec_evtreg}

Let us consider independent and identically distributed observations $Y_1,Y_2,\ldots$  with an unknown survival function $\overline{F}$ (that is $\overline{F}(y)=P(Y_1>y)$). A natural way to define extreme events is to consider the values of $Y_i$ which have exceeded some high threshold $u$. The excesses above $u$ are then defined as the variables $Y_i -u$ given that $Y_i > u$. The asymptotic behavior of extreme events is characterized by the distribution of the excesses which is given by 
\[
\overline{F}_u(z)= P[Y_1 - u > z \mid Y_1> u] = \frac{\overline F(u+z)}{\overline F(u)}, \ z>0 \, . 
\]
In 1975, Pickands \citep{pickands} showed that, if $\overline F$ satisfies the following property
\begin{equation}\label{eq:rv}
\lim_{t \to \infty} \frac{\overline{F}(ty)}{\overline{F}(y)} = y^{-1/\gamma_0}, \, \forall y >0, 
\end{equation}
with $\gamma_0>0$, then 
\begin{equation}\label{eqplus}
\lim_{u \to \infty} \sup_{z>0} |\overline F_u(z) - \overline{H}_{\sigma_{0u},\gamma_0}(z) | = 0
\end{equation}
for some $\sigma_{0u}>0$ and $\overline{H}_{\sigma_{0u},\gamma_0}$ necessarily belongs to the Generalized Pareto (GP) distributions family which distribution function is of the form 
 $$
\overline{H}_{\sigma_{0u},\gamma_0}(z) = \left(1+ \gamma_0 \frac{z}{\sigma_{0u}}\right)^{-1/\gamma_0}, \ z>0 ,
$$
where $\sigma_{0u}>0$ is a scale parameter and $\gamma_0>0$ is a shape parameter, which reflects the heaviness of the tail distribution. Especially, if $\gamma_0 \in ]0;1[,$ the expectation of $Y_1$ is finite whereas if $\gamma_0\geq1$ the expectation of $Y_1$ is infinite. More details on these results can be found in e.g. \citep{Coles2001,beirlant2004statistics}.

In practice, the so-called Peaks over Threshold (PoT) method is widely used, see \citep{davison1990models,Coles2001}. It consists in choosing a high threshold $u$ and fitting a GP distribution on the excesses above that threshold $u$. The estimation of the parameters $\sigma_{0u}$ and $\gamma_0$ may be done by maximizing the GP likelihood. The choice of the threshold $u$ can be understood as a compromise between bias and variance: the smaller the threshold, the less valid the asymptotic approximation, leading to bias; on the other hand, a too high threshold will generate few excesses to fit the model, leading to high variance. The existing methods are mostly graphical, up to our knowledge, no automatic data-driven selection procedure is available. 

In the present paper, we consider a regression framework, that is that our goal to know the impact of some random covariates $\mathbf{X}$ on the tail of the distribution of a response variable $Y.$ The previous convergence results hold, but for quantities $\sigma_{0u},$ $\gamma_0$ and $u$ that may depend on $\mathbf{X}.$ More precisely, this means that, if we assume that $\gamma_0(\mathbf{x})>0$ for all $\mathbf{x}$ (which is the assumption that we will make throughout this paper), then (\ref{eq:rv}) becomes
\begin{equation}\label{eq:rv2}
\lim_{t \to \infty} \frac{\overline{F}(ty|\mathbf{x})}{\overline{F}(y|\mathbf{x})} = y^{-1/\gamma_0(\mathbf{x})}, \, \forall y >0, 
\end{equation}
where $\overline F(y|\mathbf{x})=\mathbb{P}(Y\geq y |\mathbf{X}=\mathbf{x}),$ see \citep{beirlant2004statistics} and references therein, and
 (\ref{eqplus}) becomes
\begin{equation}\label{eqplus2}
\lim_{u(\mathbf{x}) \to \infty} \sup_{z>0} |\overline F_{u(\mathbf{x})}(z  \mid \mathbf{x}) - \overline{H}_{\sigma_{0u(\mathbf x)}(\mathbf{x}),\gamma_0 (\mathbf x)}(z) | = 0. 
\end{equation}
where $\overline F_{u(\mathbf{x})}(z  \mid \mathbf{x})=P[Y-u(\mathbf{x}) > z \mid Y > u(\mathbf{x}),\mathbf{X}=\mathbf{x}].$

Suppose that we observe  $(Y_i,\mathbf{X}_i)_{1\leq i \leq n}$ a sample of $(Y,\mathbf{X}),$ where $\mathbf{X}$ belongs to a compact set $\mathcal{X}.$ Following the PoT approach, the estimation of the function $\gamma_0(\mathbf{X})$ and $\sigma_0(\mathbf{X})=\sigma_{0u(\mathbf{X})}$ can typically be done by fitting a regression model on the data points $(Y_i,\mathbf{X}_i)$ such that $Y_i$ exceeds a proper threshold $u(\mathbf{X}_i).$ More precisely, let us define \begin{eqnarray}
 \theta^*(\mathbf x)&=&\arg \max_{ \theta\in \Theta} \mathbb \esp [\phi(Y-u(\mathbf{X}),\theta)|\mathbf{X} =\mathbf{x},Y\geq u(\mathbf{x})] \nonumber \\ &=& \arg \max_{ \theta\in \Theta}\esp [\phi(Y-u(\mathbf{X}),\theta)\mathbf{1}_{Y\geq u(\mathbf{X})}|\mathbf{X}=\mathbf{x}], \label{m_est}
 \end{eqnarray}
 where $\theta=(\sigma,\gamma)^{\tau}$ (where $a^{\tau}$ denotes the transpose of a vector $a$) and $\phi$ is the GP log-likelihood function, that is
\[
\phi(z,\theta) = -\log(\sigma)-\left(\frac{1}{\gamma}+1\right)\log\left(1+\frac{\gamma z}{\sigma} \right).
\]
From (\ref{eqplus2}), $\theta^*(\mathbf{x})$ should be close to $\theta_0(\mathbf{x})=(\sigma_0(\mathbf{x}),\gamma_0(\mathbf{x}))^{\tau}$ for $u(\mathbf{x})$ large enough. Based on this idea, Beirlant (2004) \citep{beirlant2} proposed a nonparametric approximation of the loss function maximized by $\theta^*.$ This technique, based on local polynomials, requires continuity of the covariates and some smoothness assumptions on $\theta_0.$ On the other hand, parametric methods  \citep[][]{ChavezDemoulin2015,beirlant2003} have also been proposed, but relying on a stronger assumption on the shape of $\theta_0.$

In the next section, we introduce a regression tree approach which is adapted to both continuous and discrete variables, and that relies on few assumptions (since the estimated regression function $\theta_0$ does not need to be smooth).

\subsection{GPD regression trees}
\label{sec_GPDtrees}

Regression Trees are a convenient tool to capture heterogeneous behaviors in the data, see \citep{breiman}. These models aim at constituting classes of observations which have a relatively similar behavior in terms of the response variable $Y.$ These classes are defined by ``rules'', which affect an observation to one of these classes according to the values of its covariates $\mathbf{X}$. These rules are obtained from the data through the CART (Clustering And Regression Tree) algorithm, and the non-linearity of the procedure allows for an adaptation to the estimation of large classes of regression functions.

Fitting regression trees relies on a so-called ``growing phase'', described in our context in Section \ref{subsec:buildingProc}, which corresponds to the determination of these splitting rules. Section \ref{sec5} shows how an estimator of the regression function $\theta_0$ can be deduced from such a tree. The ``pruning step'', which can be understood as a model selection procedure, is described in Section \ref{subsec:pruning}.

\subsubsection{Growing step: construction of the maximal tree}
\label{subsec:buildingProc}

The CART algorithm consists in determining iteratively a set of  ``rules'' $\mathbf{x}=(x^{(1)},\ldots,x^{(d)})\rightarrow R_{j}(\mathbf{x})$ to split the data, aiming at optimizing some objective function (also referred to as splitting criterion). In our case, 
we want to approximate the criterion (\ref{m_est}), that is we are searching for a regression function $\widehat{\theta}(\mathbf{X})$ among some class such that $\sum_{i=1}^n \phi(Y_i-u(\mathbf{X}_i),\widehat{\theta}(\mathbf X_i))\mathbf{1}_{Y_i\geq u(\mathbf{X}_i)}$ is maximal. To shorten the notation, let $\varphi(Y_i,\theta)=\phi(Y_i-u(\mathbf{X}_i),\theta)\mathbf{1}_{Y_i\geq u(\mathbf{X}_i)}.$

A set of rules $(R_j)_{j\in J}$ is a set of maps such that $R_j(\mathbf{x})=1$ or 0 depending on whether some conditions are satisfied by $\mathbf{x}$, with $R_j(\mathbf{x})R_{j'}(\mathbf{x})=0$ for $j\neq j'$ and $\sum_{j}R_j(\mathbf{x})=1$. {In case of regression trees, these partitioning rules have a particular structure, since they can be written, for quantitative covariates (the case of $\mathbf{x}$ containing qualitative variables is described in Remark \ref{r_qual} below), as $R_j(\mathbf{x})=\mathbf{1}_{\mathbf{x}_1\leq \mathbf{x}<\mathbf{x}_2}$ for some $\mathbf{x}_1\in \mathbb{R}^d$ and $\mathbf{x}_2\in \mathbb{R}^d,$ with comparison symbols to be understood as component-wise comparisons. In other terms, if $d=1,$ rules can be identified as partitioning segments, if $d=2$ they are rectangles (hyper-rectangles in the general case).} The determination of these rules from one step to another can be represented as a binary tree, since each rule $R_j$ at step $k$ generates two rules $R_{j1}$ and $R_{j2}$ (with $R_{j1}(\mathbf{x})+R_{j2}(\mathbf{x})=0$ if $R_j(\mathbf{x})=0$) at step $k+1.$ The algorithm can be summarized as follows:

\noindent
\textbf{Step 1:} $R_1(\mathbf{x})=1$ for all $\mathbf{x},$ and $n_1=1$ (corresponds to the root of the tree).

\noindent
\textbf{Step k+1:} Let $(R_1,...R_{n_k})$ denote the rules obtained at step $k.$ For $j=1,\ldots,n_k,$
\begin{itemize}
  \item if all observations such that $R_j(\mathbf{X}_i)=1$ have the same characteristics,  
  then keep rule $j$ as it is no longer possible to segment the population;
  \item else, rule $R_j$ is replaced by two new rules $R_{j1}$ and $R_{j2}$ determined in the following way: for each component $X^{(\ell)}$ of $\mathbf{X}=(X^{(1)},\ldots,X^{(d)})$, define the best threshold $x^{(\ell)}_{j\star}$ to split the data, such that $x^{(\ell)}_{j\star} =  \arg\max_{x^{(\ell)}} \Phi(R_j,x^{(\ell)}),$ with
  \begin{eqnarray*}
    \Phi(R_j,x^{(\ell)}) & = & \sum_{i=1}^n \varphi(Y_i,\theta_{\ell-}(\mathbf{X}_i,R_j)) \mathbf{1}_{X_{i}^{(\ell)}\leq x^{(\ell)}} R_j(\mathbf{x})\\
    & + & \sum_{i=1}^n \varphi(Y_i,\theta_{\ell+}(\mathbf{X}_i,R_j)) \mathbf{1}_{X_{i}^{(\ell)} > x^{(\ell)}}  R_j(\mathbf{x}),
  \end{eqnarray*}
   where
   \begin{eqnarray*}   
   \widehat{\theta}(R_j)&=&\arg \max_{\theta\in \Theta}\sum_{i=1}^n \varphi(Y_i,\theta(\mathbf{X}_i))R_j(\mathbf{X}_i), \\
   \theta_{\ell-}(x,R_j) &=& \arg \max_{\theta\in \Theta} \sum_{i=1}^n \varphi(Y_i,\theta(\mathbf{X}_i))\mathbf{1}_{X_{i}^{(\ell)}\leq x}R_j(\mathbf{X}_i), \\
   \theta_{\ell+}(x,R_j) &=& \arg \max_{\theta\in \Theta} \sum_{i=1}^n \varphi(Y_i,\theta(\mathbf{X}_i))\mathbf{1}_{X_{i}^{(\ell)} > x}R_j(\mathbf{X}_i).
   \end{eqnarray*}

  Then, select the best component index to consider: $\widehat{\ell} = \arg\max_{\ell} \Phi(R_j,x^{(\ell)}_{j\star})$.\\
  Define the two new rules $R_{j1}(\mathbf{x})=R_j(\mathbf{x})\mathbf{1}_{x^{(\widehat{\ell})} \leq x^{(\widehat{\ell})}_{j\star}},$ and $R_{j2}(\mathbf{x})=R_j(\mathbf{x})\mathbf{1}_{x^{(\widehat{\ell})} > x^{(\widehat{\ell})}_{j\star}}.$
  \item Let $n_{k+1}$ denote the new number of rules.
\end{itemize}

\noindent
\textbf{Stopping rule:} stop if $n_{k+1}=n_k.$

\vskip 0.3cm
This algorithm has a binary tree structure. The list of rules $(R_j)_{1\leq j \leq n_k}$ are identified with the leaves of the tree at step $k,$ and the number of leaves of the tree is increasing from step $k$ to step $k+1.$ The stopping rule can also be slightly modified to ensure that there is a minimal number of points of the original data in each leaf of the tree at each step.

\begin{remark}
\label{r_qual}
In this version of the CART algorithm, all covariates are continuous or $\{0,1\}-$valued. For qualitative variables with more than two modalities, they must be transformed into binary variables, or the algorithm must be slightly modified so that the splitting step of each $R_j$ should be done by finding the best partition into two groups on the values of the modalities that minimizes the loss function. This can be done by ordering the modalities with respect to the average value---or the median value---of the response for observations associated with this modality.
\end{remark}

\subsubsection{From the tree to the parameter estimation}
\label{sec5}

From a given set of rules $\mathcal{R}=(R_j)_{j=1,\ldots,s},$ let $\mathcal{T}_j=\{\mathbf{x}:R_j(\mathbf{x})=1\},$ the $j$th leaf of the corresponding tree. The estimator $\widehat{\theta}$ associated with a tree $\mathcal{T}=(\mathcal{T}_{\ell})_{\ell=1,...,K}$ (where $K$ is the total number of leaves) is obtained as 
$$\widehat{\theta}(\mathbf{x})=\sum_{\ell=1}^K \widehat{\theta}(R_j)R_j(\mathbf{x})=\sum_{\ell=1}^K \widehat{\theta}_{\ell}\mathbf{1}_{\mathbf{x}\in \mathcal{T}_{\ell}}.$$

The maximal tree is the $T_{\max}$ obtained once the previous algorithm stops. It corresponds to a trivial estimator of $m,$ since either the number of observations in a leaf is one, or all observations in this leaf have the same characteristics $\mathbf{x}.$ 

The pruning step, presented in the next section, consists in extracting from the maximal tree a subtree that achieves a compromise between simplicity and good fit.

\subsubsection{Selection of a subtree: pruning step}
\label{subsec:pruning}

For the pruning step, a standard way to proceed is to use a penalized approach to select the appropriate subtree, see \citep[][]{breiman, gey}. For a given tree $T_K$ with $K$ leaves $(\mathcal{T}_{\ell})_{\ell=1,...,K},$ associated with the corresponding estimator $\widehat{\theta},$ the performance of this tree is measured through the following criterion
\begin{equation}
\label{c_pen}
\frac{1}{k_n}\sum_{\ell=1}^K \sum_{i=1}^n \varphi(Y_i-u,\widehat{\theta}(\mathbf{X}_i))\mathbf{1}_{\mathbf X_i \in \mathcal T_\ell}-\lambda K. 
\end{equation}

For a given level of penalty $\lambda,$ the selected tree is the one that maximizes criterion (\ref{c_pen}), achieving a compromise between good fit and simplicity. To determine this optimal tree, it is not necessary to compute all the subtrees from the maximal tree. It suffices to determine, for all $K\geq 0,$ the subtree $T_K$ which maximizes the criterion (\ref{c_pen}) among all subtrees with $K$ leaves, and then to determine the final tree among a list of $K_{\max}$ trees (where $K_{\max}$ is the number of leaves of the maximal tree). The trees $T_K$ are easy to determine, since $T_{K}$ is obtained by removing one leaf to $T_{K+1}$, see p.284--290 in \citep[][]{breiman}.

The penalization constant $\lambda$ can be chosen using a test sample or $k-$fold cross-validation. In the first case, data are split into two parts before making the tree grow (a training data of size $n$ and a test sample which is not used in computing the tree). In the second case, the dataset is randomly split into $k$ parts which successively act as a training or a test sample, see for example \citep{allen1974relationship,stone1974cross}.

\subsection{Comparison with competing approaches}
\label{subsec:compete}

Compared to competing approaches in extreme value regression, the advantage of the procedure is to introduce discontinuities in the regression function while parametric approaches suppose a form of linearity, e.g. \citep[][]{beirlant2003}. The more flexible non-parametric approaches, as \citep{beirlant2}, rely on smoothing techniques that require the covariates to be continuous. Chavez-Demoulin et al.  \citep{ChavezDemoulin2015} propose a semi-parametric framework to separate the continuous covariates from the discrete ones. Smoothing splines are used to estimate non-parametrically the continuous part, while the influence of discrete covariates is captured by a parametric function.

\section{Main results}\label{sec:results}
\label{sec_main}

In this section, we show that the GP regression tree procedure defined in Section \ref{sec_GPDtrees} is consistent. Notations and assumptions used throughout this section are listed in Section \ref{subsec:not}. We then state our first main results on the consistency of a fixed tree with $K$ leaves, by separating the stochastic part of the error (Section \ref{subsec:bounds:estimate}) from the misspecification part (Section \ref{subsec:misspec}) caused by the GP approximation. The consistency of the pruning methodology is studied in Section \ref{subsec:consistency:pruning}. 
 
\subsection{Notations}
\label{subsec:not}

Let us recall that the PoT approach consists in considering observations such that $Y_i\geq u(\mathbf{X}_i).$ Below, we will restrain ourselves to the case where $u(\mathbf{x})= u.$ Our results easily extend to the case where $u(\mathbf{x})=\sum_{j=1}^m u_j \mathbf{1}_{\mathbf{x}\in \mathcal{X}_j},$ where $(\mathcal{X}_j)_{1\leq j \leq m}$ are subsets of the space of covariates. Another possible extension would be to assume that $u(\mathbf{x})=f(\beta,\mathbf{x})$ for some parameter $\beta$ and $f$ a known function. Nevertheless, a choice of such a particular threshold function seems hard to justify. Hence, we restrain ourselves to the simplest case.

Moreover, the result we provide holds uniformly for $u\in [u_{\min},u_{\max}]$ to cover adaptive choice of this parameter. Conditions on $u_{\min}$ and $u_{\max}$ are given in Assumption \ref{a_u}.

\begin{assum}
\label{a_u}
If $n$ denote the number of observations, let $k_n$ be an intermediate sequence, that is $k_n \to \infty$ and $k_n/n \to 0$, as $n \to \infty$. Then, let $k_n/n$ denote the average proportion of $Y$ larger than $u_{\min},$ that is
$\mathbb{P}(Y\geq u_{\min})=k_n n^{-1}.$ 
Moreover, assume that
$$\mathbb{P}(Y\geq u_{\max})=\frac{u_0 k_n}{n},$$
for some constant $u_0\geq 1.$
\end{assum} 

Here, $k_n$ will denote the average number (up to some constant) of observations on which the model is fitted. It is hence related to the rate of convergence of the procedure. The following assumption introduces conditions on this rate $k_n$ and on the space of parameters.

\begin{assum}
\label{a_rate}
We assume the parameter space to be $\Theta=\mathcal{S}\times \Gamma$ where
\begin{itemize}
\item $\mathcal{S}=[\sigma_{\min},\sigma_n],$ with $\sigma_n=O(n^{a_1}),$ with $a_1 >0$,
\item $\Gamma$ is a compact set $[\gamma_{\min},\gamma_{\max}],$ with $\gamma_{\min}>0.$
\end{itemize}
Moreover, assume that $k_n=O(n^{a_2}),$ with $a_2 >0$, and that the number of leaves of the maximal tree $K_{\max}$ satisfies $K_{\max} \leq \kappa k_n,$ with $\kappa >0$. 

\end{assum}

Next, let us introduce some notations regarding the trees. Consider a tree $T(u)$ with $K$ leaves denoted $\mathcal T_\ell$, $\ell=1,\ldots, K$. Introducing the (normalized) contribution of the log-likelihood to the $\ell$th leaf, say
\[
L_n^\ell (\theta,u) = \frac{1}{k_n} \sum_{i=1}^n \phi(Y_i - u,\theta) \mathbf{1}_{Y_i >u} \mathbf{1}_{\mathbf X_i \in \mathcal T_\ell},
\]
let
$$\widehat{\theta}_\ell(u)=\arg\max_{\theta} L^\ell_n(\theta,u),$$ the estimated value of the parameter in the leaf $\mathcal T_\ell.$ This estimator is expected to be close to
$$\theta^*_{\ell}(u)=\arg \max_{\theta} L^{\ell}(\theta,u),$$
introducing $L^{\ell}(\theta,u)=k_n n^{-1}\esp[L^\ell_n(\theta,u)].$ We denote by $T^*(u|T)$ the tree with same leaves as $T,$ but with parameters $\theta^*_{\ell}(u).$ This quantity is not exactly our target: ideally, we would like to estimate
$$\theta_{0,\ell}(u)=(\sigma_0(\mathcal{T}_{\ell},u),\gamma_0(\mathcal{T}_{\ell})),$$
such that
\begin{equation}
\lim_{t \to \infty} \sup_{z>0} |\overline F_{t}(z  \mid \mathcal{T}_{\ell}) - \overline{H}_{\sigma_0(\mathcal{T}_{\ell},t),\gamma_0(\mathcal{T}_{\ell})}(z) | = 0,
\end{equation}
where
$\overline F_{t}(z  \mid \mathcal{T}_{\ell}) =\mathbb{P}(Y-t\geq z | \mathbf{X}\in \mathcal{T}_{\ell},Y\geq t).$ We denote $T_0(u|T)$ the tree with same leaves as $T$ but with parameters $\theta_{0,\ell}(u).$

If $\theta=(\theta_\ell)_{\ell=1,\ldots,K}$ denotes the set of parameters of a tree with $K$ leaves $(\mathcal T_\ell)_{\ell=1,\ldots,K}$, we will denote $\theta(\mathbf x)$ the function defined by
\[
\theta(\mathbf x) = \sum_{\ell = 1}^K \theta_\ell \mathbf 1_{\mathbf x \in \mathcal T_\ell} .
\]
We will first focus on the difference $T(u)$ and $T^*(u|T)$ in Section \ref{subsec:bounds:estimate}, which is the stochastic part of the error. On the other hand, the difference between $T^*(u|T)$ and $T_0(u|T)$ (and ultimately the difference between $\widehat{\theta}(\mathbf{x})$ and $\theta_0(\mathbf{x})$) is studied in Section \ref{subsec:misspec} and can be understood as a misspecification term, caused by the fact that the excesses above the threshold are not exactly GP distributed.

 For $\ell = 1,\ldots, K$, let $\nabla_{\theta} L^\ell (\theta,u)$ denote the gradient of $L^\ell (\theta,u),$ denoting
\[
\nabla_{\theta} L^\ell (\theta,u) = \esp
\left(
\begin{array}{cc}
g_{\theta,\ell}(Y_i - u) \\
h_{\theta,\ell}(Y_i - u) 
\end{array}
\mathbf{1}_{\mathbf X_i \in \mathcal T_\ell} \mathbf{1}_{Y_i > u}
\right)
\]
with, for $z>0$, 
\begin{eqnarray*} 
g_{\theta}(z) &=& \partial_\sigma \phi(z,\theta) = \left(-\frac1 \sigma + \left(1 + \frac 1 \gamma \right)\frac{\gamma z}{\sigma^2(1+ \frac{\gamma z}{\sigma})}\right), \\
h_{\theta}(z) &=& \partial_\gamma \phi(z,\theta) = \left(-\frac{1}{\gamma^2} \log\left(1+\frac{\gamma z}{\sigma}\right) + \left(1+\frac 1 \gamma \right)\frac{ z}{\sigma+\gamma z}\right). 
\end{eqnarray*}
To handle the stochastic part, we shall add a few assumptions. We first need a domination condition on the class of the derivatives of the functions $y\rightarrow \phi(y-u,\theta).$ These derivatives are uniformly bounded by
$$\Phi(y)=C(1+\log (1+wy)),$$
where $C$ is a constant (not depending on $n$), and $w=\gamma_{\max}/\sigma_{\min}.$

\begin{assum}
\label{a_phi} 
Assume that, for some $\rho_0>0,$
$$m_{\rho_0}=\mathbb{E}\left[\exp(\rho_0\Phi(Y))\right]<\infty.$$
\end{assum}

In fact, this assumption is automatically satisfied if Assumption \ref{a_rate} holds: since $\gamma(\mathbf{x})\geq \gamma_{\min}>0,$ $\esp[|Y|^{1/\gamma-\varepsilon}]<\infty,$ for any $\varepsilon>0.$

Additionally, we need some regularity assumptions on the criterion $L^{\ell}.$

\begin{assum}
\label{a_moche}
Let
$$M^\ell_{\theta_1,\theta_2,\theta_3,\theta_4}(u)=\mathbb{E}\left[\left(\begin{array}{cc} \partial_\sigma g_{\theta_1}(Y-u) & \partial_{\gamma} g_{\theta_2}(Y-u) \\ 
\partial_\sigma h_{\theta_3}(Y-u) & \partial_{\gamma} h_{\theta_4}(Y-u)\end{array}\right)\mathbf{1}_{Y\geq u}\mid \mathbf X \in \mathcal T_{\ell}\right].$$
Assume that there exists a constant $\mathfrak C_1>0$ such that 
$$\inf_{a,b\in \mathbb{R}}\inf_{\theta_1,\theta_2,\theta_3,\theta_4\in \Theta}\inf_{u \in[u_{\min}, u_{\max}]}\inf_{\ell=,\ldots,K} \left|M^\ell_{\theta_1,\theta_2,\theta_3,\theta_4}(u)\left(\begin{array}{c} a \\ b \end{array}\right)\right|\geq \mathfrak C_1\max(|a|,|b|).$$
\end{assum}

The condition on the infimum can be relaxed: Assumption \ref{a_moche} comes naturally in using a Taylor expansion. Hence, the infimum with respect of $\theta_1,\ldots,\theta_4$ can be restricted to $\theta_2$ to $\theta_3$ belonging to a small neighborhood of $\theta_1$ (and not to the whole set $\Theta$).

\subsection{Deviation bounds for our estimator}\label{subsec:bounds:estimate}

 In this section, we study the consistency of a fitted tree $T(u)$, a  subtree of the maximal tree $T_{\max}(u),$  with $K$ leaves $(\mathcal T_\ell)_{\ell=1,\ldots, K}$,. We compare this fitted tree to $T^*(u|T),$ which is the tree based on the same subdivision, but where, in each leaf $\ell,$ the parameter is $\theta^*_{\ell}(u)$ (instead of $\widehat{\theta}_{\ell}(u)$ in $T(u)$).
 
The first step is to define a distance between trees. Let us define $\|(a,b)\|_{\infty}=\max(|a|,|b|),$ and for two trees $T$ and $S$,
$$\|T-S\|_2=\left(\int \|T(\mathbf{x})-S(\mathbf{x})\|_{\infty}^2 \mathrm{d}\mathbb{P}(\mathbf{x})\right)^{1/2}.$$ The main result of this section is a deviation bound for $\|T(u)-T^*(u|T)\|_2,$ which is Theorem \ref{thm:deviation:bounds} below.

\begin{theorem}
\label{thm:deviation:bounds}

Under Assumptions \ref{a_u} to \ref{a_moche}, and let $\beta>0$ such that $\beta a_2\geq 10/\rho_0$ (with $\rho_0$ defined in Assumption \ref{a_phi}) and for $t\geq { c_1} K (\log k_n) k_n^{-1},$ with $c_1>0$, 
\begin{eqnarray*}
\lefteqn{\mathbb{P}\left(\sup_{u_{\min}\leq u\leq u_{\max}}\| T(u)- T^*(u|T)\|_2^2\geq t\right) }\\
&\leq & 2\left(\exp\left(- \frac{{ \mathcal C_1}  k_n t}{K\beta^2 (\log k_n)^{2}} \right) + \exp\left(-\frac{{ \mathcal C_2} k_n t^{1/2}}{K^{1/2}\beta \log k_n} \right)\right) +\frac{{ \mathcal C_3}K}{k_n^{5/2} t^{3/2}},
\end{eqnarray*}
where ${ \mathcal C_1}$, ${ \mathcal C_2}$ and ${ \mathcal C_3}$ are positive constants. 
\end{theorem}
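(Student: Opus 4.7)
The plan is to localize the analysis leaf-by-leaf and linearize the MLE equation. Since $\widehat\theta_\ell(u)$ satisfies $\nabla L_n^\ell(\widehat\theta_\ell,u)=0$ while $\nabla L^\ell(\theta_\ell^*,u)=0$ by construction, a componentwise mean-value theorem on $\nabla L_n^\ell$ around $\theta_\ell^*$ gives
\begin{equation*}
\nabla L_n^\ell(\theta_\ell^*,u) \;=\; -\widetilde H_n^\ell\,(\widehat\theta_\ell-\theta_\ell^*),
\end{equation*}
where $\widetilde H_n^\ell$ is a $2\times 2$ empirical matrix built from $\partial_\sigma g,\partial_\gamma g,\partial_\sigma h,\partial_\gamma h$ evaluated at intermediate points. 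Its expectation equals $(np_\ell/k_n)\,M^\ell$ with $p_\ell=\mathbb P(\mathbf X\in\mathcal T_\ell)$, and Assumption \ref{a_moche} keeps this non-singular with quantitative lower bound $\mathfrak C_1$; a uniform-in-$\theta$ concentration argument on a finite $\varepsilon$-net of the compact set $\Theta$ transfers the lower bound to $\widetilde H_n^\ell$ itself, so that inverting and rearranging yields
\begin{equation*}
\|\widehat\theta_\ell-\theta_\ell^*\|_\infty \;\lesssim\; \frac{1}{n p_\ell}\,\bigl\| \textstyle\sum_{i=1}^n W_i^\ell(u) \bigr\|_\infty,
\quad W_i^\ell(u):=\nabla\phi(Y_i-u,\theta_\ell^*)\mathbf 1_{Y_i>u}\mathbf 1_{\mathbf X_i\in\mathcal T_\ell}.
\end{equation*}
Since $\|T(u)-T^*(u|T)\|_2^2=\sum_\ell p_\ell\|\widehat\theta_\ell-\theta_\ell^*\|_\infty^2$, the task reduces to bounding these centered leaf scores, and the weighting by $p_\ell$ interacts with the $(np_\ell)^{-1}$ factor above to produce the stated scaling.

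\textbf{Score concentration via truncation.} Each $W_i^\ell(u)$ is centered with $|W_i^\ell(u)|\le\Phi(Y_i)$, and $\Phi(Y)$ has a finite exponential moment by Assumption \ref{a_phi}. I would truncate at level $M=\beta\log k_n$, split each summand into a bounded part and a residual, and apply Bernstein's inequality to the bounded part. This produces the two standard regimes: a variance-dominated exponential giving $\exp(-\mathcal C_1 k_n t/(K\beta^2\log^2 k_n))$ and a range-dominated one giving $\exp(-\mathcal C_2 k_n t^{1/2}/(K^{1/2}\beta\log k_n))$, the $\sqrt{t/K}$ scaling in the second term arising because the event $\|T-T^*\|_2^2\ge t$ forces some leaf to satisfy $p_\ell\|\widehat\theta_\ell-\theta_\ell^*\|_\infty^2\ge t/K$, from which one reads off the required deviation of $\|\sum W_i^\ell\|_\infty$. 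The condition $\beta a_2\rho_0\ge 10$ ensures, via Markov applied to $\exp(\rho_0\Phi(Y))$, that $\mathbb P(\max_i\Phi(Y_i)>M)\le n\,m_{\rho_0}k_n^{-\beta\rho_0}$ decays at least as $n^{-9}$, so the truncation residual is handled separately by a Chebyshev/Markov bound using a polynomial (third) moment of $\|\widehat\theta_\ell-\theta_\ell^*\|_\infty$, which furnishes the polynomial term $\mathcal C_3 K/(k_n^{5/2}t^{3/2})$.

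\textbf{Uniformity in $u$ and main obstacle.} For the supremum over $u\in[u_{\min},u_{\max}]$, I would discretize the interval on a grid of polynomial size and exploit the monotonicity of $u\mapsto\mathbf 1_{Y>u}$ (which makes $u\mapsto W_i^\ell(u)$ change at only finitely many points for each sample), combined with Assumption \ref{a_u}, which constrains the exceedance probability to vary only by the factor $u_0$; the resulting covering cost is logarithmic and is absorbed into the $\log k_n$ already present through $M$. A final union bound over the $K$ leaves produces the factors of $K$ appearing in the exponents, and the lower threshold $t\ge c_1 K\log k_n/k_n$ is precisely what is needed for the Taylor remainder to be controllable (equivalently, for $\widetilde H_n^\ell$ to be provably close to its expectation with high probability on the relevant event). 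I expect the main technical obstacle to be this uniform-in-$\theta$, uniform-in-$u$, uniform-in-$\ell$ invertibility of $\widetilde H_n^\ell$: Assumption \ref{a_moche} only controls the \emph{expected} matrix, so the intermediate points delivered by the mean-value theorem must be kept inside a neighborhood where the empirical Hessian inherits the lower bound, and this becomes delicate for leaves with small $p_\ell$, which are precisely those for which the Bernstein concentration is weakest.
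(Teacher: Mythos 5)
Your linearization goes through the \emph{empirical} Hessian: you write $\nabla L_n^\ell(\theta_\ell^*,u)=-\widetilde H_n^\ell(\widehat\theta_\ell-\theta_\ell^*)$ and then need $\widetilde H_n^\ell$ to be uniformly invertible, which you flag as the main obstacle and leave unresolved. The paper takes the other branch of the standard M-estimation argument and thereby avoids this obstacle entirely: it Taylor-expands the \emph{population} gradient $\nabla L^\ell(\cdot,u)$ around $\theta_\ell^*$ (where it vanishes), so that Assumption \ref{a_moche} --- which is phrased precisely for the expected matrix, uniformly over all intermediate $\theta$'s --- directly yields $\|\widehat\theta_\ell-\theta_\ell^*\|_\infty\lesssim\|\nabla L^\ell(\widehat\theta_\ell,u)\|_\infty$; then the first-order condition $\nabla L_n^\ell(\widehat\theta_\ell,u)=0$ rewrites this as the deviation $\|\nabla L_n^\ell(\widehat\theta_\ell,u)-\esp[\nabla L_n^\ell(\widehat\theta_\ell,u)]\|_\infty=\|\mathcal W_1^\ell(\widehat\theta_\ell,u)\|_\infty$, with no random Hessian appearing at all. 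The price the paper pays is that $\mathcal W_1^\ell$ must be controlled uniformly in $\theta$ (not just at $\theta_\ell^*$, since $\widehat\theta_\ell$ is random), while your version needs the score only at the fixed point $\theta_\ell^*$ but the empirical Hessian uniformly over the Taylor path. As written, your proposal has a genuine gap at exactly that spot; to close it you would need a separate concentration result for the $2\times 2$ random matrix $\widetilde H_n^\ell$, uniform in $\ell$, $u$ and the intermediate points, and you correctly note this becomes delicate for small $p_\ell$.

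Two further deviations from the paper's route are worth noting. For the uniformity in $(\ell,u,\theta)$ the paper does not use a union bound over leaves and a grid in $u$: it folds all three indices into a single function class $\mathfrak F$ in \eqref{eq_frakF}, bounds its covering number (Lemma \ref{lem:cov:numbers}), and invokes the Talagrand/Einmahl--Mason inequalities (Propositions \ref{talagrand} and \ref{emason}) on that class once; this is where the two exponential terms come from, rather than Bernstein plus a discretization as you suggest. For the truncation residual, the paper does not bound $\mathbb P(\max_i\Phi(Y_i)>M_n)$ nor a polynomial moment of $\|\widehat\theta_\ell-\theta_\ell^*\|_\infty$; it bounds the tail part of the centered \emph{process} $\overline{\mathcal Z}(M_n)$ directly via Markov with exponent $3$ and the moment estimates of Lemma \ref{lemma_moments}, and this is precisely where the constraint $\beta a_2\ge 10/\rho_0$ is consumed (to make three different products of $E_{j,n}$'s all decay at rate $k_n^{-5/2}$). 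Your account of where the $\sqrt{t/K}$ scaling comes from (some leaf must carry at least $t/K$ of the squared error) matches the paper's reduction $\|T(u)-T^*(u|T)\|_2^2\le K\sup_\ell\|\widehat\theta_\ell-\theta_\ell^*\|_\infty^2$, and the choice of truncation level $M_n=\beta\log k_n$ is the same; but the core linearization step as you have set it up does not deliver the theorem without the missing empirical-Hessian control.
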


The proof of Theorem \ref{thm:deviation:bounds} is postponed to the appendix section (Section \ref{sec1}). The exponential terms on the right-hand side come from concentration inequalities proved by Einmahl and Mason \citep{einmahl2005uniform}, while the polynomially decreasing term is related to the fact that the log-likelihood is an unbounded quantity, but that can still controlled when considering its expectation.

As a by-product, we obtain the following Corollary \ref{cor_exp} (by integration of the bound of Theorem \ref{thm:deviation:bounds}).

\begin{corollary}
\label{cor_exp}
$$\mathbb{E}\left[{ \sup_{u_{\min}\leq u\leq u_{\max}}\|T(u)- T^*(u|T)\|_2^2}\right]\leq { \mathcal C_4} \frac{K\beta ^2(\log k_n)^2}{k_n}.$$
\end{corollary}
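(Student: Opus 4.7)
The plan is to apply the layer-cake formula $\mathbb{E}[Z]=\int_0^\infty \mathbb{P}(Z\geq t)\,\mathrm{d}t$ to the non-negative random variable $Z=\sup_{u_{\min}\leq u\leq u_{\max}}\|T(u)-T^*(u|T)\|_2^2$, and then insert the tail bound of Theorem~\ref{thm:deviation:bounds}. Since the theorem is only stated for $t\geq t_0:=c_1 K(\log k_n)/k_n$, I would split the integral at $t_0$: on the interval $[0,t_0]$ the probability is trivially bounded by $1$, contributing $t_0$, which is already of order $K(\log k_n)/k_n$ and therefore of the announced order.

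On $[t_0,\infty)$ I would integrate the three terms of the deviation bound separately. The first exponential integrates directly to
\[
\int_{t_0}^{\infty} 2\exp\!\left(-\frac{\mathcal C_1 k_n t}{K\beta^2(\log k_n)^2}\right)\mathrm{d}t \;\leq\; \frac{2K\beta^2(\log k_n)^2}{\mathcal C_1 k_n},
\]
which gives the target order. For the stretched-exponential term, the substitution $s=\mathcal C_2 k_n t^{1/2}/(K^{1/2}\beta\log k_n)$ turns the integral into $2/a^2$ with $a=\mathcal C_2 k_n/(K^{1/2}\beta\log k_n)$, producing a contribution of order $K\beta^2(\log k_n)^2/k_n^2$, which is smaller than the previous one by a factor $1/k_n$. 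For the polynomial term, $\int_{t_0}^\infty t^{-3/2}\,\mathrm{d}t=2t_0^{-1/2}$, so its contribution is of order $K^{1/2}/(k_n^2(\log k_n)^{1/2})$, again dominated by the first one under Assumption~\ref{a_rate}.

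Summing these contributions and absorbing all constants (including $c_1$, $\mathcal C_1$, $\mathcal C_2$, $\mathcal C_3$ and the fixed $\beta=10/(a_2\rho_0)$ if one prefers) into a single constant $\mathcal C_4$ yields the stated bound. Uniformity in $u$ is inherited from the fact that Theorem~\ref{thm:deviation:bounds} controls directly $\mathbb{P}(\sup_u \|T(u)-T^*(u|T)\|_2^2\geq t)$, so the supremum passes inside the expectation for free via the layer-cake identity. I do not expect any real obstacle here: the only mild care is to verify that the polynomial tail $t^{-3/2}$ is indeed integrated away from zero (which holds since $t_0>0$) and that its resulting contribution is dominated by the leading exponential term, both of which are immediate under the growth assumptions on $k_n$ and $K_{\max}$.
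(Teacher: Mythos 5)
Your argument is correct and follows essentially the same route as the paper: the layer-cake identity, splitting the integral at $t_0=c_1K(\log k_n)k_n^{-1}$ where the probability is bounded by one, then integrating the two exponential terms and the polynomial tail of Theorem~\ref{thm:deviation:bounds} and absorbing constants, with uniformity in $u$ coming for free since the theorem already bounds the supremum. Your treatment of the $t^{-3/2}$ term (integrating from $t_0$ and getting a contribution of order $K^{1/2}k_n^{-2}(\log k_n)^{-1/2}$) is, if anything, slightly more careful than the paper's, and all contributions are dominated by the leading $K\beta^2(\log k_n)^2/k_n$ term as claimed.
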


From Corollary \ref{cor_exp}, one can see that the $L^2-$norm of the stochastic part of the error, $\mathbb{E}\left[\sup_{u_{\min}\leq u\leq u_{\max}}\| T(u)- T^*(u\mid T)\|_2^2\right]^{1/2},$ is proportional to $K^{1/2},$ and, as expected, increases with the complexity of the tree. On the other hand, the error decreases almost at rate $k_n^{1/2}$ (up to some logarithmic factor), which is the convergence rate of standard estimators used to estimate the tail parameter in absence of covariates.

The proof is again postponed to the appendix (Section \ref{sec2}).

\subsection{Misspecification bias}
\label{subsec:misspec}

For $\mathbf X = \mathbf x$, the ultimate goal is to estimate the tail index parameter $\theta_0(\mathbf x)=(\sigma_{0u}(\mathbf x),\gamma_0(\mathbf x))$, introduced in \eqref{eqplus2}, by maximization of the GP likelihood.  The difference between $\theta_0(\mathbf x)$ and $\theta^*(\mathbf x)$ can be understood as a misspecification term due to the fact that the observations above the threshold are not exactly distributed according to a GP distribution. This bias term can be controlled under second order conditions which are standard in Extreme Value Analysis.

Indeed, recall that assuming that the underlying distribution $\overline{F}(\cdot|\mathbf x)$ satisfies Condition \eqref{eq:rv2} guarantees that asymptotically the associate excesses above the threshold $u$ are GP distributed. For finite samples, the excesses are thus not exactly GP distributed which introduces some bias term. In order to control this bias term, a second-order condition is needed, that is a condition to control the rate of convergence in Condition \eqref{eq:rv2}. There exist numerous ways to express this second-order condition. Here, we consider the same condition as Condition C.6 in \citep{beirlant2}. First, Condition \eqref{eq:rv2} can be translated into 
\begin{equation}
\overline{F} (y\mid \mathbf x) = y^{-1/\gamma_0(\mathbf x)} \eta(y \mid \mathbf x)\, , \forall y>0,
\end{equation}
where $\eta$ is a slow-varying function, that is $\eta(ty\mid \mathbf x)/\eta(t\mid \mathbf x) \rightarrow 1$ as $t \to \infty$, for all $y>0$. 
 
\begin{assum}\label{assum:rv:second} Assume that for all $\mathbf x$, there exist a constant $c$ and a function $\psi$ such that
\begin{equation*}
\eta(ty \mid \mathbf x)/\eta(t\mid \mathbf x) = 1 + c\psi(t)\int_1^t v^{\rho-1} \mathd v + o(\psi(t ))
\end{equation*}
as $t \to \infty$ for each $y>0$ with $\psi(t)>0$ and $\psi(t) \to 0$ as $t \to \infty$ and $\rho\leq 0$. 
\end{assum}

Let us note that we could also consider the case of $c,$ $\psi$ and $\rho$ depending on $\mathbf{x},$ and then assume some uniform bound over $x$ of these quantities. We chose this more restrictive formulation to simplify the notations.

The next result guarantees that the bias term tends to 0 as $u \to \infty$. 
\begin{proposition}\label{prp:bias}
There exists a constant $c$ and a function $\psi $ such that $\psi(u)>0$ and $\psi(u) \to 0$ as $u \to \infty$, and such that, for $\mathbf X = \mathbf x$,
\begin{eqnarray*}
\|\theta_{0}(\mathbf x) - \theta^*(\mathbf x)\|_\infty
&\leq& \mathfrak C_2 (u)\frac{k_n}{n}  \left( 1 + c\gamma_{\max} \psi(u) + o(\psi(u)) \right) \,,
\end{eqnarray*}
where $\mathfrak C_2 (u)$ is a constant depending on $u$, $\gamma_{\min}$ and $\gamma_{\max}$.
\end{proposition}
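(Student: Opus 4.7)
The plan is to exploit the first-order optimality of $\theta^*(\mathbf{x})$ together with a Taylor expansion around $\theta_0(\mathbf{x})$, invert the resulting Hessian using a pointwise version of Assumption \ref{a_moche}, and then control the remaining score term by means of the second-order regular variation condition \ref{assum:rv:second}. Set $\Lambda(\theta) = \esp[\phi(Y-u,\theta)\mathbf{1}_{Y\geq u}\mid \mathbf{X}=\mathbf{x}]$. Since $\theta^*(\mathbf{x})$ maximizes $\Lambda$, we have $\nabla_\theta \Lambda(\theta^*)=0$, so a Taylor expansion yields $0=\nabla_\theta \Lambda(\theta_0)+M(\tilde\theta)(\theta^*-\theta_0)$ for some intermediate $\tilde\theta$, where $M(\tilde\theta)$ is the Hessian of $\Lambda$. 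A pointwise analog of Assumption \ref{a_moche} (conditioning on $\mathbf{X}=\mathbf{x}$ instead of $\mathbf{X}\in\mathcal{T}_\ell$) then gives
$$\|\theta^*(\mathbf{x})-\theta_0(\mathbf{x})\|_\infty \leq \mathfrak{C}_1^{-1}\|\nabla_\theta \Lambda(\theta_0)\|_\infty,$$
so the problem is reduced to bounding this score at $\theta_0$.

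Next, I would exploit the fact that the GP score has mean zero under its own law, $\int_0^\infty \nabla_\theta\phi(z,\theta_0)h_{\theta_0}(z)\mathd z =0$. Writing $\pi_u(\cdot\mid\mathbf{x})$ for the conditional density of $Y-u$ given $Y\geq u$, $\mathbf{X}=\mathbf{x}$, and subtracting $\overline{F}(u\mid\mathbf{x})$ times this vanishing integral, one obtains
$$\nabla_\theta \Lambda(\theta_0) = \overline{F}(u\mid\mathbf{x})\int_0^\infty \nabla_\theta\phi(z,\theta_0)\bigl[\pi_u(z\mid\mathbf{x})-h_{\theta_0}(z)\bigr]\mathd z.$$
By Assumption \ref{a_u}, $\overline{F}(u\mid\mathbf{x})$ is at most of order $k_n/n$, which produces the leading $k_n/n$ factor in the claim.

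The final ingredient is the second-order condition, which rewrites $\overline{F}_u(z\mid\mathbf{x})=\overline{H}_{\theta_0}(z)\bigl[1+c\psi(u)k_\rho(1+z/u)+o(\psi(u))\bigr]$ with $k_\rho(y)=\int_1^y v^{\rho-1}\mathd v$. Differentiating in $z$ yields an $O(\psi(u))$ expansion of the density difference $\pi_u-h_{\theta_0}$. Integrating this expansion against the score $\nabla_\theta\phi(z,\theta_0)$—whose integrability against $h_{\theta_0}$ is guaranteed by $\gamma_0\geq \gamma_{\min}>0$ (Assumption \ref{a_rate})—produces the correction factor $(1+c\gamma_{\max}\psi(u)+o(\psi(u)))$, with $\gamma_{\max}$ entering through the tail behavior of $\nabla_\theta\phi$. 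Combining this bound with the inversion step above gives the proposition.

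The main obstacle I anticipate is the clean passage from the second-order condition on $\overline{F}$ to a uniform-in-$z$ bound on $\pi_u-h_{\theta_0}$ that can be integrated against the GP score: the score mixes logarithmic and rational terms in $z$ with parameters varying over the compact set $\Theta$, so bookkeeping of constants—especially to extract the explicit dependence on $\gamma_{\max}$ and the leaf-independent constant $\mathfrak{C}_2(u)$—requires care. A minor secondary point is that Assumption \ref{a_moche} is stated with leaf-wise conditioning while the proposition is pointwise, but stating and applying the natural pointwise analog is routine.
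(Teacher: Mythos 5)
Your overall skeleton matches the paper's: use stationarity of $\theta^*$, Taylor-expand the expected score around it, invert the Hessian through (a version of) Assumption \ref{a_moche}, pull out the factor $k_n/n$ from the exceedance probability, and control the remaining score at $\theta_0$ via the second-order condition of Assumption \ref{assum:rv:second}. The divergence is in how that last score term is handled, and that is where two genuine gaps sit.

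First, you pass from the expansion of the conditional survival function $\overline F_u(\cdot\mid\mathbf x)$ to an $O(\psi(u))$ control of the density difference $\pi_u(\cdot\mid\mathbf x)$ minus the GP density by ``differentiating in $z$''. Assumption \ref{assum:rv:second} is a statement about distribution functions only; an asymptotic expansion of $\overline F_u$ cannot in general be differentiated (the derivative of the $o(\psi(u))$ remainder need not be $o(\psi(u))$), so as written this step is not licensed — you correctly flag it as the main obstacle, but it is exactly the point that needs a new ingredient, either a density-level second-order assumption or an integration by parts that brings the argument back to survival functions. The paper avoids densities altogether: it writes the two score components at $\theta_0$ as the explicit functionals $\esp[Z/(1+Z/u)]$ and $\esp[\log(1+Z/u)]$, evaluates them by integrating the survival function (layer-cake identities), and applies the second-order expansion there; no cancellation with the GP score is used, which is why its bound has the form $\mathfrak C_2(u)\tfrac{k_n}{n}(1+c\gamma_{\max}\psi(u)+o(\psi(u)))$ with a non-vanishing leading constant. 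Your mean-zero-score subtraction is a genuinely different (and, if made rigorous, sharper) idea, since it would give a bound of order $(k_n/n)\psi(u)$, which still implies the proposition; note also that your $h_{\theta_0}$ for the GP density clashes with the paper's use of $h_\theta=\partial_\gamma\phi$. Second, the $k_n/n$ factor: you invoke Assumption \ref{a_u} to claim $\overline F(u\mid\mathbf x)=O(k_n/n)$, but that assumption only controls the unconditional probabilities $\mathbb{P}(Y\geq u_{\min})$ and $\mathbb{P}(Y\geq u_{\max})$; the conditional exceedance probability at a given $\mathbf x$ may be larger. The paper's proof works leaf-wise and keeps the indicator inside an unconditional expectation, so the relevant factor is $\mathbb{P}(Y\geq u,\mathbf X\in\mathcal T_\ell)\leq\mathbb{P}(Y\geq u_{\min})=k_n/n$; if you insist on the pointwise formulation (consistent with your pointwise analogue of Assumption \ref{a_moche}), you need an additional assumption linking conditional and unconditional tails, or you should revert to the leaf-wise quantities as the paper does.
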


\subsection{Consistency of the pruning step}\label{subsec:consistency:pruning}

The previous results cover the case of a tree with fixed number of leaves $K.$ In practice, the question is to select the proper subtree of $T_{\max}(u)$, the maximal tree obtained once the previous step of the CART procedure has stopped, with some ``optimal'' number of leaves, which is the objective of the pruning step described in Section \ref{subsec:pruning}.

As seen in Corollary \ref{cor_exp}, the stochastic part of the error put to the square increases proportionally to $K.$ This is closely related to the natural inflation of the log-likelihood (which is locally quadratic) when the number of leaves increases, justifying a penalty proportional to $K,$ as in \citep{breiman,gey}. The aim of Theorem \ref{thselection} is to corroborate this choice.

First of all, for a decomposition $(\mathcal{T}_{\ell}^K)_{\ell=1,...,K}$ of $K$ leaves, let us define $T_K(u)$ the tree with parameters $\widehat{\theta}^{K}_{\ell}(u)$ estimated with the CART procedure, $T^*_K(u)$ the tree with parameters
$$\theta^{*K}_{\ell}(u) =\arg \max_{\theta\in \Theta} \esp\left[\phi(Y - u,\theta) \mathbf{1}_{Y >u} \mathbf{1}_{\mathbf X_i \in \mathcal T_\ell^K}\right],$$
and $\mathbf{x}\rightarrow \theta^{*K}(\mathbf{x})=\sum_{\ell = 1}^K\theta^{*K}_{\ell}(u) \mathbf{1}_{\mathbf x \in \mathcal T_\ell^K}$ the corresponding regression function.
Moreover, let 
$$K_0(u)=\arg \max_{K=1,...,K_{\max}} \esp\left[\phi(Y - u,\theta^{*K}(\mathbf{X})) \mathbf{1}_{Y >u} \right].$$
In words, $T^{*}(u) = T^{*}_{K_0(u)}(u)$ is the subtree of $T_{\max}(u)$ that achieves the closest proximity to $\mathbf{x}\rightarrow \theta^{*K}(\mathbf{x})$ in the sense that it maximizes the expectation of the (pseudo)-log-likelihood.

Second of all, we denote, as explained in \eqref{c_pen}, the selected number of leaves
\[
\widehat{K}(u) = \arg\max_{K=1,\ldots, K_{\max}} \left\{\frac{1}{k_n}\sum_{\ell=1}^K\sum_{i=1}^n \phi(Y_i-u, \widehat{\theta}^{K}(\mathbf X_i)) \mathbf 1_{Y_i>u} \mathbf 1_{\mathbf X_i \in \mathcal T_\ell} - \alpha K  \right\} \, ,
\]
and $\widehat{T}(u)=T_{\widehat K(u)}(u)$ the corresponding selected tree. 

Define the log-likelihood $L_n(T_K,u)$ associated with a tree $T_K(u)$ with $K$ leaves $(\mathcal T_{\ell}^K)_{\ell = 1,\ldots, K}$ with parameters $\widehat \theta^K(u)=\left(\widehat\theta^K_{\ell}(u)\right)_{\ell=1,\ldots,K}$
\[
L_n(T_K,u) = \sum_{\ell = 1}^K L_n^{\ell}(\widehat \theta^K_\ell,u) \, .
\] 
Then $L(T_K,u) = \esp[L_n(T_K,u)]$. Finally, for two trees $T$ and $S$, $\Delta L_n(T, S) = L_n(T,u) - L_n(S,u)$ and similarly, $\Delta L(T, S) = L(T,u) - L(S,u)$.  

The following Theorem \ref{thselection} shows that the pruning methodology selects a tree $\widehat T(u)$ which approximately achieves the same rate as $T_{K_0}(u),$ even if $K_0(u)$ is unknown, provided that the penalty constant $\lambda$ belongs to some reasonable interval.

\begin{theorem}
\label{thselection}
Let $\mathfrak D = \inf_u \inf_{K < K_0(u)}  \Delta L(T^*(u),T^*_K(u))$ and suppose that there exists a constant $c_2>0$ sauch that the penalization constant $\lambda$ satisfies 
\[
 c_2  \{\log k_n\}^{1/2} k_n^{-1/2} \leq \lambda
\leq(\mathfrak{D} - 2{c_2 \{\log(k_n)\}^{1/2} k_n^{-1/2})k_n^{-1}},
\]
then, for all $u\in[u_{\min},u_{\max}]$,
$$\mathbb{E}\left[\|\widehat{T}(u)-T^*(u)\|^2_2\right]\leq \frac{\mathcal{C}_5  K_0(u) (\log k_n)^2 }{k_n},$$
where $\mathcal{C}_{5}$ is a constant depending on $T^*(u).$
\end{theorem}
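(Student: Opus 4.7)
The plan is to establish the bound via a standard model-selection oracle inequality adapted to the GP log-likelihood. The argument combines a bias/variance decomposition of the risk, a uniform-in-$K$ concentration of the centred empirical log-likelihood, and a combinatorial exploitation of the penalised optimality that defines $\widehat K(u)$.

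First, I would write
\[
\|\widehat T(u) - T^*(u)\|_2^2 \leq 2\,\|T_{\widehat K(u)}(u) - T^*_{\widehat K(u)}(u)\|_2^2 + 2\,\|T^*_{\widehat K(u)}(u) - T^*(u)\|_2^2 ,
\]
and treat the two terms separately. The second, ``bias'' term vanishes on the event $\{\widehat K(u) \geq K_0(u)\}$: because the partitions underlying $(T_K)_{K=1,\ldots,K_{\max}}$ are nested by construction of the pruning, $K \mapsto L(T^*_K,u)$ is non-decreasing and equals $L(T^*(u),u)$ for every $K \geq K_0(u)$ by definition of $K_0$ as an arg-max. The strict concavity of $L$ in each leaf (which follows from Assumption \ref{a_moche}) then forces the oracle parameters on the refined partition to coincide, cell by cell, with those on the coarse one, yielding $T^*_{\widehat K(u)} \equiv T^*(u)$ as functions on $\mathcal X$.

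To rule out $\widehat K(u) < K_0(u)$, I would use the penalised optimality
\[
L_n(T_{\widehat K},u) - \lambda \widehat K \;\geq\; L_n(T_{K_0},u) - \lambda K_0 ,
\]
combine it with $L_n(T_K,u) \geq L_n(T^*_K,u)$ (by empirical maximality of $\widehat\theta^K$) and with a uniform concentration
\[
\sup_{1\leq K\leq K_{\max}}\;\sup_{u\in[u_{\min},u_{\max}]} \bigl| L_n(T^*_K,u) - L(T^*_K,u) \bigr| \;\leq\; c_2\,\sqrt{\log k_n / k_n} ,
\]
obtained from the Einmahl--Mason-type inequality already used in the proof of Theorem \ref{thm:deviation:bounds} together with a union bound in $K$. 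On the event in question this gives $\mathfrak D \leq \lambda(K_0 - \widehat K) + 2c_2\sqrt{\log k_n/k_n}$, which contradicts the upper bound on $\lambda$; the complementary (bad) event has small probability coming from the exponential and polynomial tails of Theorem \ref{thm:deviation:bounds}, and contributes a negligible term once multiplied by the crude diameter bound on $\|\widehat T - T^*\|_2^2$.

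Symmetrically, on $\{\widehat K(u) > K_0(u)\}$, the same selection inequality combined with the second-order Taylor expansion $L_n(T_K,u) - L_n(T^*_K,u) \asymp \|T_K - T^*_K\|_2^2$ provided by Assumption \ref{a_moche} shows $\widehat K(u) \leq C\,K_0(u)$ with high probability (the lower bound on $\lambda$ is used here). Inserting this control into the stochastic term via Corollary \ref{cor_exp}, applied with a union bound over the random value of $\widehat K$, yields the announced bound $\mathcal C_5\,K_0(u)(\log k_n)^2/k_n$. The delicate point throughout is the uniform-in-$K$ concentration displayed above: the polynomial-tail term in Theorem \ref{thm:deviation:bounds}, stemming from the unboundedness of $\phi$, does not survive a naive union bound, and it is precisely at this stage that Assumption \ref{a_phi} is exploited, via a truncation of $\Phi(Y)$ at an $n$-dependent level that relegates the polynomial contribution to a negligible additive correction.
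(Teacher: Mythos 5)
Your overall strategy is the same as the paper's (penalized-criterion comparison between $\widehat K(u)$ and $K_0(u)$, the identity $T^*_K(u)=T^*(u)$ for $K\geq K_0(u)$, truncated concentration à la Theorems \ref{th1}--\ref{th2}, then the deviation bounds of Theorem \ref{thm:deviation:bounds}/Corollary \ref{cor_exp} for the stochastic part), but as written the exclusion of $\{\widehat K(u)<K_0(u)\}$ has a genuine gap. The uniform concentration you display is only over the \emph{oracle} trees, $\sup_{K,u}|L_n(T^*_K,u)-L(T^*_K,u)|$, and you propose to bridge to the fitted trees via $L_n(T_K,u)\geq L_n(T^*_K,u)$. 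That inequality points the wrong way for what you need: to get your claimed conclusion $\mathfrak D\leq \lambda(K_0(u)-\widehat K(u))+2c_2\{\log k_n\}^{1/2}k_n^{-1/2}$ you must \emph{upper}-bound $L_n(T_{\widehat K},u)$ by $L(T^*_{\widehat K},u)$ plus a fluctuation, and neither concentration at $T^*_K$ nor empirical maximality gives that, because the parameters inside $T_{\widehat K}$ are random. What is actually needed is concentration uniform over the whole class indexed by $\theta\in\Theta$, $u\in[u_{\min},u_{\max}]$ and the leaves (the class $\mathfrak F$ of Theorems \ref{th1}--\ref{th2}); this is precisely how the paper's Lemma \ref{pk} proceeds, bounding $\Delta L_n(T_K(u),T^*_K(u))$ through $|\Delta L_n-\Delta L|$ together with $\Delta L(T_K(u),T^*_K(u))\leq 0$. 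Your closing remark about truncation of $\Phi(Y)$ shows you are aware of the right machinery, but the event you construct does not cover the fitted parameters, so the contradiction argument does not go through as stated.

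Two further steps are looser than they need to be. First, on the bad event you multiply a small probability by ``the crude diameter bound on $\|\widehat T(u)-T^*(u)\|_2^2$''; since $\Theta=[\sigma_{\min},\sigma_n]\times\Gamma$ with $\sigma_n=O(n^{a_1})$, this diameter is polynomially large, and the polynomial tail $O(K k_n^{-5/2}t^{-3/2})$ of Theorem \ref{thm:deviation:bounds} need not beat it without extra conditions relating $a_1$ and $a_2$; the paper avoids this by never invoking a diameter, instead integrating the deviation bound through terms of the form $\mathbb{E}[\|T_K(u)-T^*_K(u)\|_2^2\mathbf 1_{\|T_K(u)-T^*_K(u)\|_2^2>K}]$ and by weighting $K\,\mathbb{P}(\widehat K(u)=K)$ with the bounds of Lemma \ref{pk}, plus the bias bound $\|T^*_K(u)-T^*(u)\|_2^2=O(k_n^2/n^2)$ obtained as in Proposition \ref{prp:bias}. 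Second, the claim $L_n(T_K,u)-L_n(T^*_K,u)\asymp\|T_K(u)-T^*_K(u)\|_2^2$ is not what Assumption \ref{a_moche} delivers: that assumption controls the population Hessian, and the empirical increment also carries a linear term in the (nonzero) empirical score at $\theta^*_K$, so the two-sided equivalence you invoke to force $\widehat K(u)\leq CK_0(u)$ is not justified as stated. These points are fixable, but they require replacing your concentration event and your bad-event bookkeeping by the uniform-in-$\theta$ deviation results and the term-by-term summation that the paper actually uses.
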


The proof is given in Section \ref{sec3}.

\section{Simulation study and real data analysis}
\label{sec_simu}

This section is devoted to the illustration of the GP regression procedure on simulated data (Section \ref{subsec:simus}) and on a real dataset (Section \ref{subseb:realdata}).

\subsection{Simulations}
\label{subsec:simus}

In this section, we assess the performance of the GP regression procedure on simulated data and compare it with the competing approach proposed by \citep{ChavezDemoulin2015}. We first describe the simulation framework and then discuss the experiments results.

We consider the following regression framework: $X$ is a one-dimensional variable uniformly distributed on $[0,1]$, and the response variable $Y$, conditionally on $X=x$, is distributed according to a Burr distribution of parameters $(\sigma, \gamma_0(x))$ which survival function is given by  
\begin{equation*}
    \label{eq_burr}
    \overline{F}(y \mid  x) =\frac{1}{1+ \left(y/\sigma\right)^{1/\gamma_0(x)}} \, ,
\end{equation*}
with $\sigma>0$ and $\gamma_0(x)$ for all $x$. Note that $\overline{F}(\cdot \mid x)$ satisfies the property \eqref{eq:rv2}.

We consider two cases: (i) $\gamma_0(x)$ as a step-wise function and (ii)  $\gamma_0(x)$ as smooth function. In both cases, the scale parameter $\sigma$ was fixed equal to 1. 
\paragraph{(i) step-wise function:} In this case, the function $\gamma_0$ is taken as 
\begin{equation*}
\gamma_0(x) = \begin{cases}
0.5 & \quad \text{if $0 \leq x <0.25$}\\
1 & \quad \text{if $0.25 \leq x <0.75$}\\
1.5  & \quad \text{if $0.75 \leq x \leq 1$}. 
\end{cases}
\end{equation*}
\paragraph{(ii) smooth function:} In this case, the function $\gamma_0$ is taken as, for $x \in [0,1]$, 
\begin{equation*}  
\label{eq_simu_model}
  \gamma_0(x) = 1 + \frac{\tanh(10(x-1/4))}{4} + \frac{\tanh(10(x-3/4))}{4} \, . 
\end{equation*} 

We simulate 1~000 replications for different sizes of the observation sample ($n=$1000, 2500, 5000, 10~000 and 25~000) according to the described framework for both cases (i) and (ii). For each sample, we consider the excesses above the 0.90-empirical quantile, which corresponds to $k_n=$100, 250, 500, 1~000 and 2~500.  For each simulated sample, we compute the regression tree procedure (CART), and the method based on generalized additive model (GAM) proposed by \citep{ChavezDemoulin2015}. Next we compute $\int_0^1(\hat{\gamma}(x)-\gamma_0(x))^2dx$ for each estimator. The empirical mean squared error is then obtained by averaging these errors over the 1~000 replications. Results are shown in Table \ref{tab:mse}. The boxplots of the empirical quadratic errors are shown in the supplementary material (Section A).

\begin{table}[!ht]
\caption{Empirical mean squared errors for the GP regression tree procedure (GP CART), and the GAM model for different sample sizes for a) the step-wise case and  b) the smooth case. }
\begin{center}
\begin{tabular}{c}
\begin{tabular}{c|ccccc}
$k_n$ & 100 & 250 & 500 & 1~000 & 2~500\\
\hline
GP CART & 0.290 & 0.129 & 0.107 & 0.080 & 0.050\\
GAM & 0.313 & 0.196 & 0.122 & 0.081 & 0.048
\end{tabular} \\
a) \\
\begin{tabular}{c|ccccc}
$k_n$ & 100 & 250 & 500 & 1~000 & 2~500\\
\hline
GP CART & 0.227 & 0.108 & 0.079 & 0.059 & 0.043\\
GAM & 0.233 & 0.144 & 0.068 & 0.034 & 0.016
\end{tabular}\\
 b) 
\end{tabular}
\end{center}
\label{tab:mse}
\end{table}

Let us note that the GAM approach is not designed to capture non-smooth functions like in the step-wise case. Nevertheless, we see that this technique manages to fit relatively correctly even in this case when the sample size is large. For $k_n=1~000$ and $2~500,$ the results of the GAM approach are similar or even slightly better than the regression tree method. On the other hand, we observe that regression trees lead to a better fit for small sample sizes, even in the smooth case where it is not designed to take into account the regularity of $\gamma_0(x).$

\subsection{Prediction of the  cost of flooding events in France}
\label{subseb:realdata}

 In order to improve the knowledge and the management of natural catastrophes, the French Federation of Insurance (FFA) is interested in the prediction of the cost of such events, especially of the most severe ones, shortly after their occurrence. These catastrophic events present some heterogeneity in their intensity depending on their characteristics, such as the affected meteorological region or the number of individual houses in flood risk area. The prediction of their cost thus becomes a challenging task. In this section, we illustrate how the GP regression tree procedure can be used to gain further insight in this heterogeneity. The ability of the procedure to design classes of events that are more homogeneous (in view of analyzing the tail of their distribution) is an appealing property in view of operation applications in insurance.

The database we consider was obtained through a partnership with the FFA, in particular with one of its dedicated technical body, the association of French insurance undertaking for natural risk knowledge and reduction (Mission Risques Naturels, MRN). It consists of all 3~100 flooding events that have been granted the status of natural catastrophe in France from 1999 to 2019 (let us note that the status "natural catastrophe" is a French specificity, with some legal consequences when an event receives this label, see \citep{charpentier_insurance_2021,MRN}).This database is fed by 13 contributors including the major French insurance companies, allowing this database to cover 70\% of French non-life insurance market. The database gathers information regarding each flooding event (its cost, the meteorological region, the season, the number of affected hydrological regions, the number of individual houses and the number of professional business premises in flood-risk area). Note that, since the purpose of this database is  the fast prediction of the cost of a flooding event (as soon as possible after its occurrence), the variables that are registered correspond to quantities that are available before the event, or soon after it.

The variable of interest, the total cost of a flooding event, is highly volatile. Indeed, it ranges between $0$ and $394~376~000$ euros with an empirical variance equal to $1.77e+14$. Figure \ref{fig:map:floods} shows the average of the costs of the 10\% most onerous flooding events within each meteorological region. This highlights the heterogeneity of the severity of the most severe events. Furthermore, the top ten most onerous events represent 43\% of the total cost of this database and the top hundred 80\%.

\begin{figure}[!ht]
\begin{center}
\includegraphics[width = 0.8\textwidth]{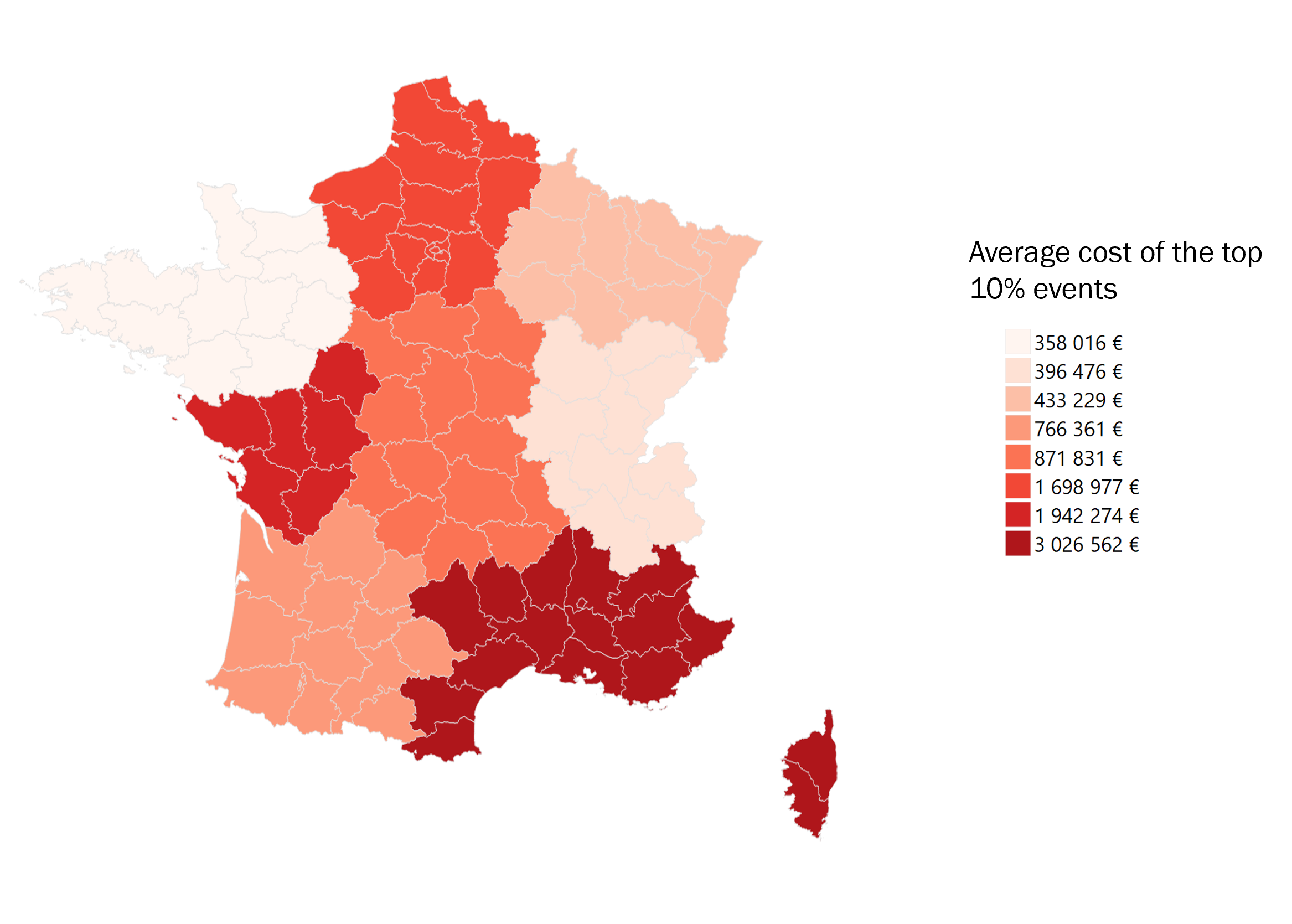}
\end{center}
\caption{Cartography of the cost of flooding events in France from 1999 to 2019. For each meteorological region, the average of the costs of the 10\% more onerous events is shown. The lighter red color suggesting a small cost while a darker color suggests a large cost.}
\label{fig:map:floods}
\end{figure}

 Now, let us recall that our goal is to understand the heterogeneity of the total cost of the most severe flooding events, that is of extreme flooding events. As explained in Section \ref{sec_evtreg}, the definition of extreme events consists in choosing a threshold $u$, which should be chosen as a bias-variance trade-off. We chose a value of $u=100~000$  based practical considerations and validated by sensitivity analyses (shown in the supplementary material, Section B). This yields 1~100 extreme events, that is for which the cost is larger than $u$.

The GP regression tree was performed on the database corresponding to the flooding events extracted from the original database for which the total cost is larger than $u$ (=100~000 euros). The variables of this database and their characteristics are summarized in Table \ref{tab:var:data}. Again, it can be noticed that the cost, the variable of interest, is highly volatile.

\begin{table}[!ht]
\caption{List of quantitative and categorical variables in the database and their characteristics. For the quantitative variables, Table a) shows the minimum, the first quartile, the median, the mean, the third quartile and the maximum, and for the categorical variables, Table b) the number of observations per category.}
\begin{center}
\begin{tabular}{c}
{\footnotesize
\begin{tabular}{m{3cm}|c|c|c|c|c|c}
 Variable & Min & 1st Q & Median & Mean & 3rd Q & Max\\
\hline
 Cost (in euros) & 100~005 & 183~901  & 390~761 & 4~949~576 & 1~339~936 & 394~376~166 \\
 \hline
Number of affected hydrological regions & 1 & 3 & 5 & 6.53 & 8 & 35\\
 \hline
 Number of individual houses  in flood risk area & 0 & 48~504 & 141~512 & 345~826 & 415~488 & 5~705~590 \\
  \hline
 Number of professional business premises in flood risk area & 0 & 17~525 & 54~921 & 168~950 & 185~772 & 2~431~039
 \end{tabular}
}\\
a) \\~\\
{\footnotesize
\begin{tabular}{c|c|c}
 Variable & Category & Number of observations\\
\hline
\multirow{8}{*}{Meteorological regions} 
& Center & 89\\
& North West& 111 \\
& North & 166\\
& North-East & 99 \\ 
& East & 135\\
& South & 281 \\
& West & 49\\
& South West & 158 \\
\hline
\multirow{4}{*}{Seasons} 
& Spring & 358\\
& Summer & 336\\
& Autumn & 251\\
& Winter & 143 \\
\hline
 \end{tabular}}
\\ 
b)
\end{tabular}
\end{center}
\label{tab:var:data}
\end{table}

The tree obtained from GP regression procedure is shown in Figure \ref{fig:tree:floods} (the quantile-quantile plots of the GP fit in each leaf are shown in the supplementary material, Section C). The tree is composed of 6 leaves, with three splits according to only 3 covariates: the number of individual houses, the number of professional business premises in flood-risk area and the number of affected meteorological regions. This seems reasonable since the first two covariates represent the exposure to floods, but also the population density of the affected area and the third one the extent of the flood. In each leaf, are given the shape and scale parameters. The worst case scenario corresponds to the leaf on the far right, with a shape parameter equal to 1 and containing 9\% of all flooding events. This leaf corresponds to events for which more than 9 meteorological regions are affected and more than 597~518 professional business premises are in flood-risk area.  The least severe case corresponds to the third leaf from the left, with a shape parameter equal to 0.24 and containing only 3\% of the events. Table \ref{tab:var:leaves} presents for each leaf the empirical median and mean of the costs and the theoretical median and mean of the corresponding GP distribution. Let us recall that for a GP distribution with a scale parameter $\sigma$ and a shape parameter $\gamma$, the theoretical median is given by $\sigma(2^\gamma-1)/\gamma$ and the empirical mean by $\sigma/(1-\gamma)$ for $\gamma<1$ and $\infty$ for $\gamma \geq 1$. First of all, for every leaf, the median is much smaller than the mean suggesting that we are indeed dealing with extreme events. Then, the empirical and theoretical medians are of the same order for each leaf while the empirical and theoretical (when it exits) means are only comparable for the leaves 3 and 5 for which the shape parameter  is significantly different from 1.

\begin{figure}[!ht]
\begin{center}
\includegraphics[width = \textwidth]{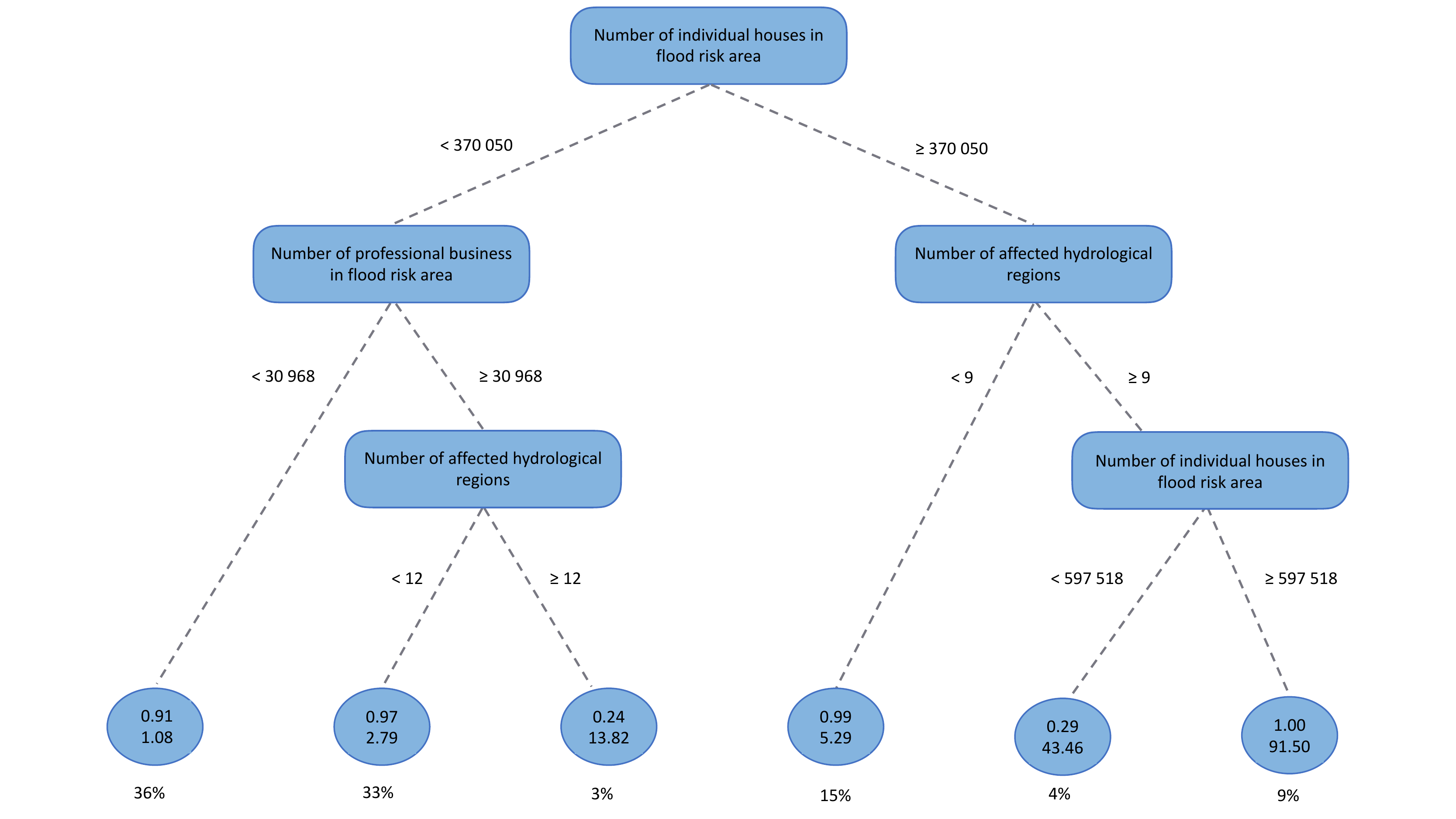}
\end{center}
\caption{GP regression tree obtained for flooding events. For each leaf, the value of the shape parameter $\gamma$ (first line) and the scale parameter $\sigma$ at $10^{-5}$ (second line) are given. Percentage of observations affected to each leaf is mentioned.}
\label{fig:tree:floods}
\end{figure}

\begin{table}
\begin{center}
{\footnotesize
\begin{tabular}{c|c|cc|cc}
Leaf & Shape parameter &  Empirical Median & Theoretical Median & Empirical Mean & Theoretical Mean\\
\hline
1 & 0.91 & 207~044 & 104~793 & 711~740 & 1~366~968 \\
2 & 0.97 & 364~513 & 276~879 & 1~325~493 & 13~168~585\\
3 & 0.24 & 900~945 & 1~045~203 & 1~929~512 & 1~938~357\\
4 & 0.99 & 578~437 & 529~377 & 3~868~125 & 807~158~756  \\
5 & 0.29 & 2~974~918 & 3~339~911 & 6~086~955 & 6~245~812\\
6 & 1.00 & 9~980~686 & 9~152~030 & 37~335~807 & $\infty$ \\
\end{tabular}}
\end{center}
\caption{Empirical median and mean, and theoretical median and mean for each leaf (in euros).}
\label{tab:var:leaves}
\end{table}

\section{Conclusion}

In this paper, we investigated the consistency of Generalized Pareto regression trees, applied to extreme value regression. The results that we derive are non-asymptotic, and allow to justify the consistency of the pruning methodology used to select a proper subtree. Let us note that the conditions under which our results hold are relatively weak, in the sense that they hold even if the tail index $\gamma$ is arbitrary close to zero (the special case $\gamma=0$ is excluded) or large. Moreover, no regularity assumptions on the target parameters is required, due to the flexibility of the regression tree procedure.

Through the simulation study and the real data analysis, we investigated the practical performances of the methodology. The regression tree approach can be applied in various situations, and still provides interpretability of the results. On the other hand, regression trees may be unstable, since quite sensitive to some changes on the data that have been used to fit them. Hence, this work is a first step into the direction of studying other relied methodologies, like random forests (see for example \citep{breiman}) in this field of extreme value regression.

\appendix

\section{Proofs}\label{sec:proofs}

In this Section, we present in details the proof of the results presented throughout the paper. Concentration inequalities required to obtain the results are presented in Section \ref{sec_cons}. These inequalities are used to obtain deviation bounds in Section \ref{sec_dev}, which are the key ingredients of the proof of Theorem \ref{thm:deviation:bounds} (Section \ref{sec1}), Corollary \ref{cor_exp} (Section \ref{sec2}), and Theorem \ref{thselection} (Section \ref{sec3}). Section \ref{sec_covering} shows some results on covering numbers that are required to control the complexity of some classes of functions considered in the proofs. Some technical lemmas are gathered in Section \ref{sec4}.

\subsection{Concentration inequalities}
\label{sec_cons}

The proofs of the main results are mostly based on concentration inequalities. The following inequality was proved initially by Talagrand \citep{talagrand1994sharper}, see also \citep[][]{einmahl2005uniform}. 

\begin{proposition} \label{talagrand} 
Let $(\mathbf V_i)_{1\leq i \leq n}$ denote i.i.d. replications of a random vector $\mathbf V,$ and let $(\varepsilon_i)_{1\leq i \leq n}$ denote a vector of i.i.d. Rademacher variables (that is, $\mathbb{P}(\varepsilon_i=-1)=\mathbb{P}(\varepsilon_i=1)=1/2)$ independent from $(\mathbf V_i)_{1\leq i \leq n}.$
Let $\mathfrak {F}$ be a pointwise measurable class of functions bounded by a finite constant $M_0.$ Then, for all $t,$
\begin{eqnarray*}
\mathbb{P}\left(\sup_{\varphi \in \mathfrak{F}}\left\|\sum_{i=1}^n \{\varphi(\mathbf V_i)-\mathbb \esp [\varphi(\mathbf V)]\}\right\|_{\infty}>A_1\left\{\mathbb \mathbb{E}\left[\sup_{\varphi\in \mathfrak{F}}\left\|\sum_{i=1}^n \varphi(\mathbf V_i)\varepsilon_i\right\|_{\infty}\right]+t\right\}\right) \\
\leq 2\left\{\exp\left(-\frac{A_2t^2}{nv_{\mathfrak{F}}}\right)+\exp\left(-\frac{A_2t}{M_0}\right)\right\},
\end{eqnarray*}
with $v_{\mathfrak{F}}=\sup_{\varphi \in \mathfrak{F}}\mathrm{Var}(\|\varphi(\mathbf V)\|_{\infty}),$ and where $A_1$ and $A_2$ are universal constants.
\end{proposition}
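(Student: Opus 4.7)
The plan is to derive the stated inequality from the classical scalar form of Talagrand's concentration inequality for empirical processes, combined with a standard symmetrization argument. Both ingredients---Talagrand's 1994 inequality in its sharpened Bousquet--Massart form and symmetrization---are available in the cited literature \citep{talagrand1994sharper,einmahl2005uniform}, so the main task is the reduction to a scalar empirical process and a rescaling of constants.

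First, I would absorb the vector-valued $\ell^\infty$-norm into the supremum. Writing $\|v\|_\infty = \max_k \max_{s\in\{\pm 1\}} s\, v_k$, I introduce the enlarged real-valued class $\widetilde{\mathfrak{F}} = \{s\, \pi_k \circ \varphi : \varphi \in \mathfrak{F},\, k,\, s\in\{\pm 1\}\}$, where $\pi_k$ denotes the $k$-th coordinate projection on the range of $\varphi$. Every $g\in\widetilde{\mathfrak{F}}$ satisfies $|g|\leq M_0$ and $\mathrm{Var}(g(\mathbf V))\leq v_{\mathfrak{F}}$ by construction, and
$$\sup_{\varphi\in\mathfrak{F}}\Bigl\|\sum_{i=1}^n\bigl(\varphi(\mathbf V_i)-\esp[\varphi(\mathbf V)]\bigr)\Bigr\|_\infty = \sup_{g\in\widetilde{\mathfrak{F}}}\sum_{i=1}^n\bigl(g(\mathbf V_i)-\esp[g(\mathbf V)]\bigr) =: Z,$$
with the analogous identity on the Rademacher side. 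This transforms the proposition into a concentration statement for a scalar supremum of sums of centered, bounded random variables.

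Next, I would invoke Talagrand's inequality in its Bousquet--Massart form: there exist universal constants $a_1,a_2>0$ such that, for every $t>0$,
$$\mathbb{P}(Z \geq \esp[Z] + t) \leq \exp\!\Bigl(-\tfrac{a_2\, t^2}{n v_{\mathfrak{F}} + a_1 M_0 \esp[Z]}\Bigr) + \exp(-a_2\, t/M_0).$$
A standard desymmetrization argument---introducing an independent copy $(\mathbf V_i')$ of $(\mathbf V_i)$ and Rademacher signs $\varepsilon_i$, then combining Jensen's inequality with the sign-symmetry of $\mathbf V_i-\mathbf V_i'$---yields $\esp[Z] \leq 2\, \esp\bigl[\sup_{g\in\widetilde{\mathfrak{F}}}|\sum_i g(\mathbf V_i)\varepsilon_i|\bigr]$, which equals twice the vector-valued Rademacher average on the right-hand side of the proposition. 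Substituting this upper bound into the threshold of the concentration inequality, and using the elementary inequality $t^2/(a+b) \geq \min(t^2/(2a), t/(2b))$ to split the mixed denominator into a sub-Gaussian part governed by $n v_{\mathfrak{F}}$ and a sub-exponential part absorbed into the $M_0$ term, yields the stated bound with suitably chosen universal $A_1, A_2$.

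The main technical obstacle is entirely contained in the Bousquet--Massart refinement of Talagrand's original inequality, which rests on the modified logarithmic Sobolev inequality and tensorization of entropy (Ledoux's argument); reproving it would require the full entropy-method machinery and is not repeated here. By contrast, the reduction to the scalar supremum and the desymmetrization are routine, and checking that $\widetilde{\mathfrak{F}}$ inherits the envelope $M_0$ and variance proxy $v_{\mathfrak{F}}$ from the original class holds trivially by construction.
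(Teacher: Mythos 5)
You should first note that the paper does not prove this proposition at all: it is quoted (up to notation) from Talagrand \citep{talagrand1994sharper} and Einmahl--Mason \citep{einmahl2005uniform}, so your attempt is effectively measured against that literature rather than an in-paper argument. Your overall plan (reduce the vector-valued $\|\cdot\|_\infty$ supremum to a scalar empirical process over an enlarged class, apply a one-sided Bousquet-type Talagrand inequality around the mean, then desymmetrize) is a legitimate route, but two of your steps fail as written. First, the variance transfer: with the proposition's definition $v_{\mathfrak F}=\sup_{\varphi}\mathrm{Var}(\|\varphi(\mathbf V)\|_\infty)$, your claim that every $g=s\,\pi_k\circ\varphi$ in the enlarged class satisfies $\mathrm{Var}(g(\mathbf V))\leq v_{\mathfrak F}$ ``by construction'' is false in general: the variance of a single coordinate is not controlled by the variance of the maximum (take $\varphi(\mathbf V)=(W,c)$ with $c\geq |W|$ a constant, so that $\mathrm{Var}(\|\varphi(\mathbf V)\|_\infty)=0$ while $\mathrm{Var}(W)>0$). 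What you actually need is the Einmahl--Mason convention, i.e.\ a supremum of coordinatewise variances (which is also what is used downstream in Lemma \ref{lemma_vphi}, where the class is real-valued), or an additional argument bridging the two quantities.

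Second, the absorption of the mean term: the inequality ``$t^2/(a+b)\geq \min(t^2/(2a),t/(2b))$'' is not correct as stated, and even the correct splitting $t^2/(a+b)\geq\min\bigl(t^2/(2a),t^2/(2b)\bigr)$ with $b\asymp M_0\esp[Z]$ (where $Z$ is your scalar supremum) produces a term $\exp\bigl(-c\,t^2/(M_0\esp[Z])\bigr)$, which still involves $\esp[Z]$ and is not of the required form $\exp(-A_2t/M_0)$; the target bound contains no $\esp[Z]$ at all. To remove it you must exploit the slack in the threshold: desymmetrization gives $\esp[Z]\leq 2\,\esp\bigl[\sup_g|\sum_i g(\mathbf V_i)\varepsilon_i|\bigr]$, so choosing $A_1>2$ makes the deviation level at least $s=(\tfrac{A_1}{2}-1)\esp[Z]+A_1t$, whence $s^2/(M_0\esp[Z])\geq 2(\tfrac{A_1}{2}-1)A_1\,t/M_0$, which is exactly what turns the $\esp[Z]$-dependent branch into the sub-exponential $t/M_0$ term. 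With that correction, and the variance fix above, your derivation does recover the cited inequality; as written, both steps are genuine gaps rather than routine bookkeeping.
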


The difficulty in using Proposition \ref{talagrand} comes from the need to control the symmetrized quantity $\mathbb{E}\left[\sup_{\varphi\in \mathfrak{F}}\left\|\sum_{i=1}^n \varphi(\mathbf V_i)\varepsilon_i\right\|\right].$ Proposition \ref{emason} is due to Einmahl and Mason \citep{einmahl2005uniform} and allows this control via some assumptions on the considered class of functions $\mathfrak{F}$.

We first need to introduce some notations regarding covering numbers of a class of functions. More details can be found for example in Chapter 2.6 of \citep[][]{vandervaart}. Let us consider a class of functions $\mathfrak{F}$ with envelope $\Phi$ (which means that for (almost) all $v,$ $f\in \mathfrak{F},$ $|f(v)|\leq \Phi(v)$). Then, for any probability measure $\mathbb{Q},$ introduce $N(\varepsilon,\mathfrak{F},\mathbb{Q})$ the minimum number of $L^2(\mathbb{Q})$ balls of radius $\varepsilon$ to cover the class $\mathfrak{F}.$ Then, define $$\mathcal{N}_{\Phi}(\varepsilon,\mathfrak{F})=\sup_{\mathbb{Q}:\mathbb{Q}(\Phi^2)<\infty} N(\varepsilon(\mathbb{Q}(\Phi^2)^{1/2}),\mathfrak{F},\mathbb{Q}).$$

\begin{proposition} \label{emason}
Let $\mathfrak{F}$ be a point-wise measurable class of functions bounded by $M_0$ with envelope $\Phi$ such that, for some constants $A_3, \alpha\geq 1,$ and $0\leq \sqrt{v} \leq M_0,$ we have
\begin{enumerate}
\item[(i)] $\mathcal{N}_{\Phi}(\varepsilon,\mathfrak{F})\leq A_3 \varepsilon^{-\alpha},$ for $0<\varepsilon<1,$
\item[(ii)] $\sup_{\varphi \in \mathfrak{F}}\mathbb{E}\left[\varphi(\mathbf V)^2\right]\leq v,$
\item[(iii)] $M_0\leq \frac{1}{4\alpha^{1/2}}\sqrt{nv/\log(A_4M_0/\sqrt{v}) },$ with $A_4=\max(e,A_3^{1/\alpha}).$
\end{enumerate}
Then, for some absolute constant $A_5,$
$$\mathbb{E}\left[\sup_{\varphi\in \mathfrak{F}}\left\|\sum_{i=1}^n \varphi(\mathbf V_i)\varepsilon_i\right\|\right]\leq  A_5\sqrt{\alpha n v \log(A_4M_0/\sqrt{v})}.$$
\end{proposition}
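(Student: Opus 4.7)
The strategy is generic chaining in the random $L^2$-pseudometric induced by the data, combined with the polynomial uniform entropy bound and a self-bounding argument whose closure is guaranteed by condition (iii).

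First I would condition on the observations $\mathbf V_1,\ldots,\mathbf V_n$. The Rademacher process $\varphi\mapsto \sum_{i=1}^n \varepsilon_i \varphi(\mathbf V_i)$ is sub-Gaussian with respect to the random metric $d_n(\varphi,\varphi')=\bigl(\sum_i(\varphi(\mathbf V_i)-\varphi'(\mathbf V_i))^2\bigr)^{1/2}$, so Dudley's entropy bound yields
$$\esp_\varepsilon\Bigl[\sup_{\varphi\in\mathfrak{F}}\bigl|\textstyle\sum_{i=1}^n\varepsilon_i\varphi(\mathbf V_i)\bigr|\Bigr]\le C\int_0^{D_n}\sqrt{\log N(\tau,\mathfrak{F},d_n)}\,d\tau,$$
where $D_n=\sup_\varphi\bigl(\sum_i\varphi(\mathbf V_i)^2\bigr)^{1/2}$. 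Writing $\Phi_n=\bigl(\sum_i\Phi(\mathbf V_i)^2\bigr)^{1/2}$ and noting that $d_n(\varphi,\varphi')=\sqrt{n}\,\|\varphi-\varphi'\|_{L^2(\mathbb P_n)}$, assumption (i) translates into $N(\tau\Phi_n,\mathfrak{F},d_n)\le A_3\tau^{-\alpha}$ for $0<\tau<1$. Substituting, changing variable, and using the elementary estimate $\int_0^u\sqrt{\log(A/\tau)}\,d\tau\lesssim u\sqrt{\log(A/u)}$ valid for $0<u\le A/e$, I obtain the conditional bound
$$\esp_\varepsilon\Bigl[\sup_{\varphi}\bigl|\textstyle\sum_i\varepsilon_i\varphi(\mathbf V_i)\bigr|\Bigr]\le C\sqrt{\alpha}\,D_n\,\sqrt{\log(A_4\Phi_n/D_n)}.$$

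Next, I would take expectation over the data. The integrand $x\mapsto x\sqrt{\log(A/x)}$ being concave in the relevant range, Jensen's inequality (combined with the deterministic envelope bound $\Phi_n\le M_0\sqrt{n}$) reduces the task to estimating $\esp[D_n^2]$ and then comparing the resulting logarithm to $\log(A_4M_0/\sqrt v)$. The standard decomposition
$$\sup_\varphi\sum_i\varphi(\mathbf V_i)^2\le nv+\sup_\varphi\bigl|\textstyle\sum_i(\varphi(\mathbf V_i)^2-\esp\varphi^2)\bigr|,$$
followed by symmetrization and the Ledoux--Talagrand contraction principle applied to the $1$-Lipschitz map $t\mapsto t^2/(2M_0)$ on $[-M_0,M_0]$, yields
$$\esp[D_n^2]\le nv+4M_0\,R_n,\qquad R_n:=\esp\Bigl[\sup_\varphi\bigl|\textstyle\sum_i\varepsilon_i\varphi(\mathbf V_i)\bigr|\Bigr].$$

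Combining the two displays produces the self-bounding inequality
$$R_n\le C\sqrt{\alpha\bigl(nv+M_0R_n\bigr)\log(A_4M_0/\sqrt v)}.$$
Condition (iii) is precisely calibrated so that $M_0\sqrt{\alpha\log(A_4M_0/\sqrt v)}\le \tfrac14\sqrt{nv}$, which forces the cross-term $M_0 R_n$ on the right to be absorbed into the $nv$ term when the quadratic inequality in $R_n$ is solved; this yields $R_n\le A_5\sqrt{\alpha nv\log(A_4M_0/\sqrt v)}$, as required. The main obstacle is the last step: propagating the conditional entropy bound through the expectation when both the radius $D_n$ and the logarithmic factor $\log(A_4\Phi_n/D_n)$ are random, and verifying that (iii) is exactly the threshold under which the self-bounding loop closes without leaving a residual linear-in-$R_n$ term.
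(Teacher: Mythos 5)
You should first note that the paper itself does not prove this proposition: it is imported verbatim from Einmahl and Mason (2005) (their Proposition A.1, which ultimately rests on Talagrand-type moment inequalities), so your argument has to stand on its own rather than be matched against an internal proof. Your overall strategy --- conditional Dudley chaining in the empirical $L^2$ metric, symmetrization plus the Ledoux--Talagrand contraction principle to get $\esp[D_n^2]\le nv+cM_0R_n$, and a self-bounding quadratic in $R_n$ closed by condition (iii), which indeed yields $M_0\sqrt{\alpha\log(A_4M_0/\sqrt{v})}\le\tfrac14\sqrt{nv}$ and hence absorbs the cross term --- is the standard modern route to exactly this inequality (essentially the Gin\'e--Guillou/Einmahl--Mason argument), and the fixed-point algebra at the end is correct.

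There is, however, a genuine gap, and it is precisely the step you flag but do not resolve. Your shortcut $\Phi_n\le M_0\sqrt{n}$ is not justified by the stated hypotheses: the uniform entropy condition (i) is normalized by the envelope $\Phi$, which need not be bounded by $M_0$ --- and in this paper's own application it is not, since the class consists of functions truncated at $M_n$ while $\Phi(y)=C(1+\log(1+wy))$ is unbounded. You also cannot simply replace $\Phi$ by $\Phi\wedge M_0$: shrinking the envelope shrinks the radii $\varepsilon\,(Q(\Phi^2))^{1/2}$ in the definition of $\mathcal N_\Phi$, so condition (i) is not inherited by the truncated envelope. Consequently your conditional bound carries the random factor $\sqrt{\log(A_4\Phi_n/D_n)}$ with $\Phi_n$ possibly much larger than $M_0\sqrt n$, and converting it into the deterministic $\sqrt{\log(A_4M_0/\sqrt v)}$ requires either reading the result with envelope equal to the uniform bound $M_0$ (the form in which Einmahl--Mason state and apply it; an unbounded envelope is then pushed into the constant $A_3$, hence into $A_4$, which is exactly how the present paper uses it through its covering-number lemma) or an additional moment argument for $\Phi$ that the statement does not supply. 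Until that de-randomization is carried out --- including the range checks needed to apply monotonicity/concavity to $x\mapsto x\sqrt{\log(A_4M_0\sqrt n/x)}$, for which the safeguard $A_4\ge e$ matters --- the proof is incomplete; with the envelope taken to be $M_0$, the remainder of your argument does go through.
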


\subsection{Deviation results}
\label{sec_dev}

We first introduce some notations that will be used throughout Sections \ref{sec_dev} to \ref{sec_covering}. In the following, $f_{\theta}$ is a function indexed by $\theta=(\sigma,\gamma)^{\tau}$ denoting either $\phi(\cdot,\theta)$ or $g_{\theta}=\partial_{\sigma} \phi(\cdot,\theta),$ or $h_{\theta}=\partial_{\gamma}\phi(\cdot,\theta).$ Let us note that the functions $y \mapsto g_\theta(y-u)$ and $y \mapsto h_\theta(y-u)$ are uniformly bounded (eventually up to some multiplication by a constant) by $\Phi(y)=\log(1+wy),$ where $w=\gamma_{\max}/\sigma_{\min}$ (see Assumption \ref{a_phi}). On the other hand, $y \mapsto \phi(y-u,\theta)$ is bounded by $\log \sigma_n+\Phi(y)=O(\log(k_n))+\Phi(y).$ We consider in the following a class of functions $\mathfrak{F}$ defined as
\begin{equation}\mathfrak F =\left\{y \mapsto f_{ \theta}(y-u)\mathbf{1}_{y\geq u}\mathbf{1}_{\mathbf{x}\in \mathcal{T}_\ell}, \; \theta \in \Theta,\; u\in [u_{\min};u_{\max}],\ell=1,...,K\right\}.
\label{eq_frakF}
\end{equation}

Next, recall that for $\ell=1,\ldots,K$
$$L_n^{\ell}(\theta,u)=\frac{1}{k_n}\sum_{i=1}^n \phi(Y_i-u,\theta)\mathbf{1}_{Y_i>u}\mathbf{1}_{\mathbf{X}_i\in \mathcal{T}_{\ell}},$$
is the (normalized) GP log-likelihood in the leaf $\ell$ of the tree $T(u)=(\mathcal T_\ell)_{\ell=1,\ldots,K}$. The key results behind Theorems \ref{thm:deviation:bounds} and \ref{thselection} relies on studying the deviation of the processes 
\begin{eqnarray}
\nonumber
\mathcal{W}_0^{\ell}(\theta,u) &=& L_n^{\ell}(\theta,u)-L^{\ell}(\theta,u), \\
\mathcal{W}_1^\ell (\theta,u) &=& \nabla_\theta L_n^\ell (\theta,u)-\nabla_\theta L^\ell (\theta,u), \nonumber
\end{eqnarray}
indexed by $\theta,\; u$ and $\ell.$

We study these deviations by decomposing
$\mathcal{W}_i^\ell (\theta,u),$ for $i=0,1,$ (which is a sum of i.i.d. observations) into two sums:
\begin{itemize}
\item the first one gathers observations smaller than some bound (more precisely, such that $\Phi(Y_i)\leq M_n$), which is considered in Theorem \ref{th1}. Since these observations are bounded (even if this bound in fact depends on $n$ and can tend to infinity when $n$ grows), we can apply a concentration inequality such as the one of Section \ref{sec_cons};
\item in the second one, we consider the observations larger than this bound, and control them through the fact that the function $\Phi$ is assumed to have a finite exponential moment (see Assumption \ref{a_phi}).
\end{itemize}

Corollary \ref{cor_thetal}, which provides deviation bounds for estimation errors in the leaves of the tree, is then a direct consequence.

\begin{theorem}
\label{th1} 
Let $M_n=\beta \log k_n,$ with $\beta >0$ and 
\[
\underline{\mathcal Z}(M_n) = \sup_{f\in \mathfrak{F}} \left| \frac{1}{k_n} \sum_{i=1}^n \left(f(Y_i) \mathbf{1}_{\Phi(Y_i)\leq M_n} - \esp \left[ f(Y_i) \mathbf{1}_{\Phi(Y_i) \leq M_n}\right]\right) \right|
\]
Then, under Assumptions \ref{a_u}, \ref{a_rate} and \ref{a_moche},
\begin{equation}\label{Zmoins}
\mathbb{P}\left(\underline{\mathcal Z}(M_n) \geq t\right) \leq 2\left(\exp\left(- \frac{{ C_1}  k_n t^2}{M_n^2} \right) + \exp\left(-\frac{{ C_2} k_n t}{M_n} \right)\right),
\end{equation}
for $t \geq {\mathfrak c_1} (\log k_n)^{1/2} k_n^{-1/2}$.
\end{theorem}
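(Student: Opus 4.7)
The plan is to apply Talagrand's inequality (Proposition \ref{talagrand}) to the truncated function class
\[
\tilde{\mathfrak F} = \left\{ (y,\mathbf x) \mapsto f(y,\mathbf x)\mathbf 1_{\Phi(y)\leq M_n} : f \in \mathfrak F \right\},
\]
viewed as a class of functions of the i.i.d. observations $V_i=(Y_i,\mathbf X_i)$, and to use the symmetrization bound of Einmahl–Mason (Proposition \ref{emason}) to control the Rademacher expectation that appears. By Assumption \ref{a_rate}, an element $f \in \mathfrak F$ satisfies $|f(V)| \leq C(\log \sigma_n + \Phi(Y))$, so that on the event $\{\Phi(Y)\leq M_n\}$ the truncated class admits a uniform envelope $M_0 = O(M_n)$ (using $\log \sigma_n = O(\log k_n) = O(M_n)$).

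First I would establish the variance bound. Because every $\tilde f \in \tilde{\mathfrak F}$ is supported on $\{Y\geq u_{\min}\}$ and $|\tilde f|\leq M_0$, Assumption \ref{a_u} yields
\[
v := \sup_{\tilde f\in\tilde{\mathfrak F}} \esp[\tilde f(V)^2] \leq C\,M_n^2\,\mathbb P(Y\geq u_{\min}) \leq C\,M_n^2\,\frac{k_n}{n}.
\]
Next I would invoke a covering number estimate $\mathcal N_\Phi(\varepsilon,\mathfrak F)\leq A_3\varepsilon^{-\alpha}$ (established separately in Section \ref{sec_covering} from the Lipschitz dependence of $(\theta,u)\mapsto f_\theta(\cdot-u)$ on the compact set $\Theta\times[u_{\min},u_{\max}]$, combined with the finite collection of leaves). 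One checks that condition (iii) of Proposition \ref{emason}, i.e.\ $M_0 \lesssim \sqrt{nv/\log(A_4 M_0/\sqrt v)}$, reduces to $M_n \lesssim \sqrt{k_n/\log k_n}$, which follows from Assumption \ref{a_rate}. Proposition \ref{emason} then delivers
\[
\esp\left[\sup_{\tilde f\in\tilde{\mathfrak F}}\left|\sum_{i=1}^n \varepsilon_i\,\tilde f(V_i)\right|\right]
\leq A_5 \sqrt{\alpha\,nv\,\log(A_4 M_0/\sqrt v)}
= O\bigl(M_n \sqrt{k_n\,\log k_n}\bigr),
\]
since $\log(M_0/\sqrt v)$ is of order $\log k_n$ up to constants, using $k_n=O(n^{a_2})$.

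Then I would plug these bounds into Proposition \ref{talagrand}. Setting the deviation parameter to $s = k_n t/(2A_1)$ and rescaling by $k_n^{-1}$, the condition that the symmetrized expectation be dominated by $s$ amounts to
\[
k_n t \;\gtrsim\; M_n \sqrt{k_n \log k_n},
\]
which is ensured by the hypothesis $t \geq \mathfrak c_1 (\log k_n)^{1/2}k_n^{-1/2}$ (with $\mathfrak c_1$ chosen large enough to absorb the $M_n$ factor into the leading term, since $M_n=\beta\log k_n$). Talagrand's bound then produces
\[
\mathbb P\bigl(\underline{\mathcal Z}(M_n)\geq t\bigr)
\leq 2\exp\!\left(-A_2\frac{(k_n t)^2}{nv}\right)+2\exp\!\left(-A_2\frac{k_n t}{M_0}\right)
= 2\exp\!\left(-C_1\frac{k_n t^2}{M_n^2}\right)+2\exp\!\left(-C_2\frac{k_n t}{M_n}\right),
\]
which is \eqref{Zmoins}.

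The main obstacle I expect is establishing the polynomial covering number bound with a uniform constant across $u\in[u_{\min},u_{\max}]$ and $\theta\in\Theta$: the derivatives $g_\theta,h_\theta$ involve factors such as $1/\sigma$ and $1/(\sigma+\gamma z)$ that require careful handling when $\sigma$ is near $\sigma_{\min}$, and the logarithmic envelope $\Phi$ must be used as the dominating function rather than a bounded one. A secondary delicate point is ensuring that the variance bound is not inflated by extra logarithmic factors; here one exploits the fact that truncation at $M_n$ combined with the small probability $k_n/n$ of belonging to the support already gives the right order, without needing to invoke the exponential moment of Assumption \ref{a_phi} (which is instead used in the companion bound for the un-truncated tail of the sum).
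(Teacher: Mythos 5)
Your proposal follows essentially the same route as the paper's proof: Talagrand's inequality (Proposition \ref{talagrand}) applied to the truncated class, the variance bound $v\leq M_n^2 k_n/n$ (the paper's Lemma \ref{lemma_vphi}), and the Einmahl--Mason bound (Proposition \ref{emason}) combined with the covering-number estimate of Lemma \ref{lem:cov:numbers} to control the Rademacher expectation, which the hypothesis on $t$ then absorbs. The only discrepancy is bookkeeping at the last step: you keep the factor $M_n$ in the symmetrized bound and claim it is absorbed by enlarging the constant $\mathfrak c_1$ (literally this would call for $t\gtrsim (\log k_n)^{3/2}k_n^{-1/2}$ since $M_n=\beta\log k_n$ grows), but the paper's own computation drops the same factor at exactly this point to reach the stated threshold, so your argument is faithful to the paper's, including this point.
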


\begin{proof} Let us stress that $\sup_{f \in \mathfrak F} \|f(y)\mathbf{1}_{\Phi(y)\leq M_n}\|_{\infty} \leq M_n.$ From Proposition \ref{talagrand}, 
\begin{eqnarray}\label{Zint}
&&\mathbb{P} \left(\underline{\mathcal{Z}}(M_n) \geq A_1 \left\{\mathbb{E} \left[\sup_{f\in \mathfrak{F}}\frac{1}{k_n}\left|\sum_{i=1}^n f(Y_i)\mathbf{1}_{\Phi(Y_i)\leq M_n}\varepsilon_i\right|\right]  +t\right\}  \right) \\
&\leq& 2 \left( \exp\left(- \frac{A_2 k_n^2 t^2}{nv_{\mathfrak F}} \right) + \exp\left(-\frac{A_2 k_n t}{M_n} \right)\right) \, . \notag
\end{eqnarray}
From Lemma \ref{lemma_vphi}, $v_{\mathfrak{F}}\leq M_n^2k_nn^{-1},$ which shows that the first exponential term on the right-hand side of (\ref{Zint}) is smaller than
\begin{equation}
\label{t1}
\exp\left(- \frac{A_2 k_n t^2}{M_n^2} \right).
\end{equation}
We can now apply Proposition \ref{emason} (combined with Lemma \ref{lem:cov:numbers}) to this class of functions with $v=M_n^2k_nn^{-1}$ and $M_0=M_n.$
Hence, 
\[
\mathbb{E} \left[\sup_{f\in \mathfrak{F}}\frac{1}{k_n}\left|\sum_{i=1}^n f(Y_i)\mathbf{1}_{\Phi(Y_i)\leq M_n} \varepsilon_i\right|\right] \leq \frac{{ A_6}}{k_n}\sqrt{nv \mathfrak{s}_n}={ A_6} \frac{ \mathfrak{s}^{1/2}_n}{k_n^{1/2}} \; ,
\]
where ${ A'_6}>0$ and $\mathfrak{s}_n=\log(\sigma_n^{\alpha} K^{4(d+1)(d+2)}n/k_n)$ ($\alpha>0$ being defined in Lemma \ref{lem:cov:numbers}).
From Assumption \ref{a_rate}, we see that $\mathfrak{s}_n=O(\log (k_n))$ (let us recall that $K$ is necessarily less than $n).$
Whence, if ${ \mathfrak c_1}= 2A_1{A'_6}$, for $t\geq {\mathfrak c_1} \left\{\log\left(k_n\right)\right\}^{1/2} k_n^{-1/2}$, 
\[
\mathbb{P}\left(\underline{\mathcal Z}(M_n) \geq t\right) \leq \mathbb{P} \left(\underline{\mathcal Z}(M_n) \geq A_1 \left\{\mathbb{E} \left[\sup_{f\in \mathfrak{F}}\frac{1}{k_n}\left|\sum_{i=1}^n f(Y_i)\mathbf{1}_{\Phi(Y_i)\leq M_n}\varepsilon_i\right|\right] + \frac{t}{2A_1}\right\} \right) \; .
\]
Equation (\ref{Zmoins}) follows from (\ref{Zint}) and (\ref{t1}) with ${{C}_1}=A_2A_1^{-2}/4$ and ${ C_2}=A_2A_1^{-1}/2.$
\end{proof}

\begin{theorem}
\label{th2}
Define
\[
\overline{\mathcal Z}(M_n)=\sup_{f\in \mathfrak{F}} \left|\frac{1}{k_n} \sum_{i=1}^n \left( f(Y_i )\mathbf{1}_{\Phi(Y_i) > M_n}\right)- \esp \left[  f(Y_i)\mathbf{1}_{\Phi(Y_i) > M_n}\right] \right|.
\]
Then, under Assumptions \ref{a_u}, \ref{a_rate} and \ref{a_phi}, for $M_n=\beta \log k_n=\beta a_2 \log n$ and $\beta a_2 \geq 10/\rho_0,$ and $t\geq {\mathfrak c_2} k_n^{-1/2},$
\begin{equation}\label{res:thm2}
\mathbb{P}\left(\overline{\mathcal Z}(M_n) \geq t\right) \leq   \frac{{ C_{3}}}{k_n^{5/2} t^3}. 
\end{equation}
\end{theorem}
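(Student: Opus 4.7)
My plan is to use a Markov inequality with a third moment, after collapsing the supremum over $\mathfrak{F}$ into a single i.i.d.\ sum via a common envelope. The Talagrand-type concentration inequality of Theorem \ref{th1} is unavailable here because on the event $\{\Phi(Y_i) > M_n\}$ the functions in $\mathfrak{F}$ are unbounded; the key observation, which makes a crude polynomial bound affordable, is that this event has probability of order $n^{-5}$ (or smaller) under Assumption \ref{a_phi} combined with the calibration $\beta a_2 \geq 10/\rho_0$. Concretely, each $f \in \mathfrak{F}$ is dominated by the envelope $G(y) := A\log n + \Phi(y)$, where the $A\log n$ term absorbs $|\log \sigma|$ for $\sigma \in [\sigma_{\min}, \sigma_n]$ in the log-likelihood part, while the derivatives $g_\theta, h_\theta$ need only $\Phi$. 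Setting $W_i := G(Y_i)\mathbf{1}_{\Phi(Y_i)>M_n}$, the triangle inequality gives
\[
\overline{\mathcal{Z}}(M_n) \leq T_n + r_n, \qquad T_n := \frac{1}{k_n}\sum_{i=1}^n W_i, \qquad r_n := \frac{n}{k_n}\esp[W_1].
\]

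The second step is to bound the tail moments $\esp[W_1^k]$ for $k=1,2,3$. Using the elementary inequality $x^k \leq C_{k,\rho_0} e^{\rho_0 x/2}$ for $x>0$ together with Assumption \ref{a_phi},
\[
\esp[\Phi(Y)^k \mathbf{1}_{\Phi(Y) > M_n}] \leq e^{-\rho_0 M_n /2}\,\esp[\Phi(Y)^k e^{\rho_0 \Phi(Y)/2}] \leq C_k \, m_{\rho_0}\, e^{-\rho_0 M_n/2},
\]
and the calibration forces $e^{-\rho_0 M_n/2} \leq n^{-5}$. Hence $\esp[W_1^k] \leq C_k (\log n)^k n^{-5}$, and the deterministic term satisfies $r_n = O((\log n)\,n^{-4}/k_n)$, which is much smaller than $\mathfrak{c}_2 k_n^{-1/2}/2$. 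Consequently, for $t \geq \mathfrak{c}_2 k_n^{-1/2}$ and $n$ large enough, $\mathbb{P}(\overline{\mathcal{Z}}(M_n) \geq t) \leq \mathbb{P}(T_n \geq t/2)$.

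Finally, Markov's inequality with third moment yields $\mathbb{P}(T_n \geq t/2) \leq 8\esp[T_n^3]/t^3$. Expanding $\esp[(\sum W_i)^3]$ via the multinomial formula for i.i.d.\ non-negative variables produces three terms $n\esp[W_1^3]$, $3n(n-1)\esp[W_1^2]\esp[W_1]$, and $n(n-1)(n-2)\esp[W_1]^3$, all dominated by the first, which is of order $(\log n)^3 n^{-4}$. Hence $\esp[T_n^3] \leq C (\log n)^3/(n^4 k_n^3)$, and since $k_n \leq n$ the quantity $(\log n)^3/(n^4 k_n^{1/2})$ is bounded uniformly in $n$; this gives $\mathbb{P}(T_n \geq t/2) \leq C_3/(k_n^{5/2} t^3)$, which is \eqref{res:thm2}. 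The main obstacle is precisely the calibration of the threshold $M_n$: the exponent $10/\rho_0$ is chosen so that $\esp[W_1^k]$ decays fast enough in $n$ to beat the target $k_n^{5/2}$ factor after only three Markov moments; a weaker threshold would force the use of higher-order moments or a more refined envelope argument, while a stronger one would not improve the stated deviation bound.
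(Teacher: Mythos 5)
Your proposal is correct and follows essentially the same route as the paper: bound $\overline{\mathcal Z}(M_n)$ by the empirical sum of an envelope on the event $\{\Phi(Y_i)>M_n\}$ plus its expectation, absorb the expectation term using the lower bound on $t$, and apply a third-moment Markov inequality with the multinomial expansion, the calibration $\beta a_2\geq 10/\rho_0$ making all three terms of order $k_n^{-5/2}$. The only differences are cosmetic: you bound the truncated moments directly via $x^k\leq C e^{\rho_0 x/2}$ and Assumption \ref{a_phi} instead of the paper's Cauchy--Schwarz/Chernoff argument (Lemma \ref{lemma_moments}), and you explicitly carry the $\log\sigma_n$ part of the envelope, which the paper's proof leaves implicit.
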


\begin{proof}

Let $\beta'=\beta a_2.$
$\overline{\mathcal Z}(M_n)$ is upper-bounded by 
\[
\frac{1}{k_n} \sum_{i=1}^n \left\{\Phi(Y_i) \mathbf{1}_{\Phi(Y_i) \geq M_n}\mathbf{1}_{Y_i\geq u_{\min}} + \mathbb{E} \left[ \Phi(Y) \mathbf{1}_{\Phi(Y) \geq M_n}\mathbf{1}_{Y\geq u_{\min}}\right]\right\} \, . 
\]

A bound for $E_{1,n}=\mathbb{E} \left[ \Phi(Y) \mathbf{1}_{\Phi(Y) \geq M_n}\mathbf{1}_{Y\geq u_{\min}}\right]$ is obtained from Lemma \ref{lemma_moments}, and $nE_{1,n}/k_{n}\leq \mathfrak{e}_1 k_n^{-1/2}$ if $\beta' \geq 2/\rho_0.$

Next, from Markov inequality,
\begin{eqnarray*}t^3\mathbb{P}\left(\frac{1}{k_n} \sum_{i=1}^n \Phi(Y_i) \mathbf{1}_{\Phi(Y_i) \geq M_n}\mathbf{1}_{Y_i\geq u_{\min}}\geq t\right)&\leq & \frac{n E_{3,n}}{k_n^3}+\frac{n(n-1)E_{2,n}E_{1,n}}{k_n^3}\\
&&+\frac{n(n-1)(n-2)E_{1,n}^3}{k_n^3}.
\end{eqnarray*}
From Lemma \ref{lemma_moments}, we get
\begin{eqnarray*}
 \frac{n E_{3,n}}{k_n^3} &\leq & \frac{\mathfrak{e}_3 n^{-(\rho_0\beta'/4-1/2)}}{k_n^{5/2}}, \\
 \frac{n(n-1)E_{2,n}E_{1,n}}{k_n^3} &\leq & \frac{\mathfrak{e}_2\mathfrak{e}_1n^{-(\rho_0\beta'/2-3/2)}}{k_n^{5/2}}, \\
 \frac{n(n-1)(n-2)E_{1,n}^3}{k_n^3} &\leq & \frac{\mathfrak{e}_1^3n^{-(\rho_0\beta'/4-5/2)}}{k_n^{5/2}}.
\end{eqnarray*} Each of these terms is bounded by $\max(\mathfrak{e}_3,\mathfrak{e_2}\mathfrak{e}_1,\mathfrak{e}_1^3)k_n^{-5/2}$ for $\beta' \geq 10/\rho_0.$
Thus, for $t\geq 2 \mathfrak{e}_1 k_n^{-1/2}$ and $\beta' \geq 10/\rho_0,$ 
\begin{eqnarray*}
\lefteqn{
\mathbb{P}\left(\overline{\mathcal Z}_n \geq t \right) }\\
&\leq & \mathbb{P} \left(\frac{1}{k_n} \sum_{i=1}^n \Phi(Y_i) \mathbf{1}_{\Phi(Y_i) \geq M_n}\mathbf{1}_{Y_i\geq u_{\min}} \geq \frac{t}{2}\right) + \mathbb{P}\left( \mathbb{E} \left[ \Phi(Y) \mathbf{1}_{\Phi(Y) \geq M_n}\mathbf{1}_{Y\geq u_{\min}}\right] \geq \frac{t}{2}\right)\\
&\leq & \frac{8\max(\mathfrak{e}_3,\mathfrak{e_2}\mathfrak{c}_1,\mathfrak{e}_1^3)}{t^3k_n^{5/2}}
\end{eqnarray*}
\end{proof}

We now apply these results to deduce deviation bounds on the estimators $\widehat{\theta}_{\ell}$ in the leaves of the tree.

\begin{corollary}
\label{cor_thetal}
Under the assumptions of Theorem \ref{th1} and \ref{th2} and Assumption \ref{a_moche}, for $t\geq \mathfrak c_3 (\log k_n)^{1/2}k_n^{-1/2},$
\begin{eqnarray*}
\mathbb{P}\left(\sup_{\substack{\ell=1,\ldots, K,\\ { u_{\min} \leq u \leq u_{\max}}}} \|\widehat{\theta}_\ell(u)-\theta^*_\ell(u)\|_{\infty}\geq t\right)\leq 2\left(\exp\left(- \frac{{ C_4}  k_n t^2}{\beta^2 (\log k_n)^{2}} \right) + \exp\left(-\frac{{ C_5} k_n t}{\beta \log k_n} \right)\right)+\frac{{C_6}}{k_n^{5/2} t^3}.
\end{eqnarray*}
\end{corollary}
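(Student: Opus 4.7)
The plan is to reduce the estimation-error bound to a uniform deviation bound on the gradient of the empirical log-likelihood, and then invoke Theorems~\ref{th1} and~\ref{th2} applied to the class $\mathfrak F$ of equation~\eqref{eq_frakF} specialized to the score functions $g_\theta$ and $h_\theta$. The starting point is the pair of first-order conditions $\nabla_\theta L_n^\ell(\widehat\theta_\ell(u),u)=0$ and $\nabla_\theta L^\ell(\theta^*_\ell(u),u)=0$, which hold because $\widehat\theta_\ell(u)$ and $\theta^*_\ell(u)$ are interior maximizers by Assumption~\ref{a_rate} (provided $\sigma_n$ is large enough, which happens for $n$ large).

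From these identities we write
\[
\nabla_\theta L_n^\ell(\widehat\theta_\ell(u),u)-\nabla_\theta L^\ell(\widehat\theta_\ell(u),u)
= -\bigl(\nabla_\theta L^\ell(\widehat\theta_\ell(u),u)-\nabla_\theta L^\ell(\theta^*_\ell(u),u)\bigr).
\]
A component-wise mean-value expansion of the right-hand side gives exactly the matrix $M^\ell_{\theta_1,\theta_2,\theta_3,\theta_4}(u)$ of Assumption~\ref{a_moche}, evaluated at intermediate points $\theta_1,\ldots,\theta_4$ lying on the segment between $\theta^*_\ell(u)$ and $\widehat\theta_\ell(u)$, applied to the vector $\widehat\theta_\ell(u)-\theta^*_\ell(u)$. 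Assumption~\ref{a_moche} then yields
\[
\mathfrak C_1\,\|\widehat\theta_\ell(u)-\theta^*_\ell(u)\|_\infty
\leq \|\nabla_\theta L_n^\ell(\widehat\theta_\ell(u),u)-\nabla_\theta L^\ell(\widehat\theta_\ell(u),u)\|_\infty.
\]

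Taking the supremum over $\ell=1,\dots,K$ and $u\in[u_{\min},u_{\max}]$, and upper-bounding by the supremum over $\theta\in\Theta$ as well (since $\widehat\theta_\ell(u)\in\Theta$ is random), we obtain
\[
\sup_{\ell,u}\|\widehat\theta_\ell(u)-\theta^*_\ell(u)\|_\infty
\leq \mathfrak C_1^{-1}\sup_{\theta,u,\ell}\|\nabla_\theta L_n^\ell(\theta,u)-\nabla_\theta L^\ell(\theta,u)\|_\infty.
\]
The right-hand side is exactly of the form $\sup_{f\in\mathfrak F}|k_n^{-1}\sum_i(f(Y_i)-\esp[f(Y_i)])|$ for the class $\mathfrak F$ in~\eqref{eq_frakF} built from the coordinates $g_\theta$ and $h_\theta$ of the score. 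Splitting each summand along the event $\{\Phi(Y_i)\leq M_n\}$ and its complement, we bound the supremum by $\underline{\mathcal Z}(M_n)+\overline{\mathcal Z}(M_n)$.

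It only remains to combine Theorem~\ref{th1} applied at level $\mathfrak C_1 t/2$ with $M_n=\beta\log k_n$, which delivers the two exponential terms with constants $C_4=C_1\mathfrak C_1^2/4$ and $C_5=C_2\mathfrak C_1/2$, and Theorem~\ref{th2} at the same level, which delivers the polynomial term $C_6/(k_n^{5/2}t^3)$; the lower threshold $\mathfrak c_3(\log k_n)^{1/2}k_n^{-1/2}$ is chosen so that both $\mathfrak C_1 t/2\geq \mathfrak c_1 (\log k_n)^{1/2}k_n^{-1/2}$ and $\mathfrak C_1 t/2 \geq \mathfrak c_2 k_n^{-1/2}$ hold, matching the validity ranges of Theorems~\ref{th1} and~\ref{th2}. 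The main technical obstacle is the intermediate-points issue in the Taylor step: Assumption~\ref{a_moche} is stated as an infimum over $(\theta_1,\theta_2,\theta_3,\theta_4)\in\Theta^4$, so the randomness of the evaluation points is harmless, and the passage to a supremum over $\theta$ in the gradient deviation is legitimate precisely because $\mathfrak F$ is already indexed by the whole parameter set $\Theta$.
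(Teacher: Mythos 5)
Your proposal is correct and follows essentially the same route as the paper's proof: use the two first-order conditions, a mean-value expansion combined with Assumption \ref{a_moche} to convert the gradient deviation into a lower bound on $\|\widehat\theta_\ell(u)-\theta^*_\ell(u)\|_\infty$, bound the resulting score deviation uniformly over $\Theta$, $u$ and $\ell$ by $\underline{\mathcal Z}(M_n)+\overline{\mathcal Z}(M_n)$, and apply Theorems \ref{th1} and \ref{th2} at level $\mathfrak C_1 t/2$. Your explicit handling of the random intermediate points via the infimum over $\Theta^4$ in Assumption \ref{a_moche} is exactly the justification the paper relies on.
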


\begin{proof} For $1 \leq \ell \leq K$ and ${ u_{\min} \leq u \leq u_{\max}}$, write $\theta=(s,\gamma)^{\tau}$ and $\theta^*_\ell(u)=(s^*_\ell(u),\gamma^*_\ell(u))^{\tau},$ and let $m_{u,\ell}(\theta) =\nabla_{\theta}L^\ell(\theta,u).$
 From a Taylor expansion,
$$m_{u,\ell}(\theta)=\mathbb{E}\left[\left(\begin{array}{cc} \partial_s g_{\tilde{s}_1,\gamma}(Y-u) & \partial_{\gamma} g_{s,\tilde{\gamma}_1}(Y-u) \\ 
\partial_s h_{\tilde{s}_2,\gamma}(Y-u) & \partial_{\gamma} h_{s,\tilde{\gamma}_2}(Y-u)\end{array}\right)\mathbf{1}_{\mathbf{X}\in \mathcal T_\ell}\mathbf{1}_{Y\geq u}\right](\theta-\theta^*_\ell(u))^\tau,$$
for some parameters $\tilde{\gamma}_j$ (resp. $\tilde{s}_j$) between $\gamma$ and $\gamma^*_\ell(u)$ (resp. $s$ and $s^*_\ell (u)$). From Assumption \ref{a_moche}, we get, for all $\ell=1,\ldots, K$, 
$$\frac{n}{k_n}\|m_{u,\ell}(\theta)\|_{\infty}\geq \mathfrak C_1\|\theta-\theta^*_{\ell}(u)\|_{\infty}.$$
Hence, for all $\ell=1,\ldots, K$,
$$\mathbb{P}\left(\|\widehat{\theta}_\ell(u)-\theta^*_\ell(u)\|_{\infty}\geq t\right)\leq \mathbb{P}\left(\frac{n}{k_n}\|m_{u,\ell}(\widehat{\theta})\|_{\infty}\geq \mathfrak C_1t\right).$$
Since for all $\ell=1,\ldots, K$, $\nabla_{\theta} L_n^\ell(\widehat{\theta})=0,$ $\mathcal{W}_1^\ell(\widehat{\theta}(u),u)=-\frac{n}{k_n}m_{u,\ell}(\widehat{\theta}).$
Hence,
$$\mathbb{P}\left(\sup_{\substack{\ell=1,\ldots, K, \\{ u_{\min} \leq u \leq u_{\max}}}} \|\widehat{\theta}_\ell(u)-\theta^*_l(u)\|_{\infty}\geq t\right)\leq \mathbb{P}\left(\sup_{\substack{\ell=1,\ldots,K,\\{ u_{\min} \leq u \leq u_{\max}}}}\|\mathcal{W}_1^\ell(\widehat{\theta}(u),u)\|_{\infty}\geq \mathfrak C_1t\right),$$
and the right-hand side is bounded by
$$\mathbb{P}\left(\overline{\mathcal Z}(M_n)\geq \frac{\mathfrak C_1t}{2}\right)+\mathbb{P}\left(\underline{\mathcal Z}(M_n)\geq \frac{\mathfrak C_1t}{2}\right).$$
The result follows from Theorem \ref{th1} and \ref{th2}.
\end{proof}

\subsection{Proof of Theorem \ref{thm:deviation:bounds}}
\label{sec1}

The proof of Theorem \ref{thm:deviation:bounds} then consists in gathering the results on the leaves obtained in Corollary \ref{cor_thetal}. Let ${ u_{\min} \leq u \leq u_{\max}}$, 
$$\|T(u)-T^*(u|T)\|_2^2\leq \sum_{\ell=1}^K \|\widehat{\theta}_\ell(u)-\theta^*_\ell(u)\|_{\infty}^2\leq K \sup_{\ell=1,...,K}\|\widehat{\theta}_\ell(u)-\theta^*_\ell(u)\|_{\infty}^2.$$
Hence
\begin{align*}\mathbb{P}\left(\sup_{u_{\min} \leq u \leq u_{\max}}\|T(u)-T^*(u|T)\|_2^2\geq t\right)   \leq \mathbb{P}\left(\sup_{\substack{\ell=1,\ldots,K,\\{ u_{\min} \leq u \leq u_{\max}}}}\|\widehat{\theta}_\ell(u)-\theta^*_\ell(u)\|_{\infty}\geq t^{1/2}K^{-1/2}\right).
\end{align*}
The results follows from Corollary \ref{cor_thetal}, and from the assumption on $K\leq K_{\max}=O(k_n^3).$

\subsection{Proof of Corollary \ref{cor_exp}}
\label{sec2}

Write
$$\mathbb{E}\left[{ \sup_{u_{\min} \leq u \leq u_{\max}}}\|T(u)-T^*(u|T)\|_2^2\right]=\int_0^{\infty} \mathbb{P}({ \sup_{u_{\min} \leq u \leq u_{\max}}} \|T(u)-T^*(u|T)\|_2^2\geq t)dt.$$
Let $t_n=c_1 K (\log k_n)k_n^{-1},$ then
$$\int_0^{\infty} \mathbb{P}({ \sup_{u_{\min} \leq u \leq u_{\max}}}\|T(u)-T^*(u|T)\|_2^2\geq t)dt\leq t_n+\int_{t_n}^{\infty} \mathbb{P}({ \sup_{u_{\min} \leq u \leq u_{\max}}}\|T(u)-T^*(u|T)\|_2^2\geq t)dt.$$ We now use Theorem \ref{thm:deviation:bounds} to bound the integral on the right-hand side. Since $\int_0^{\infty}\exp(-a t)dt=\frac{1}{a},$ $\int_0^{\infty} \exp(-a^{1/2} t^{1/2})dt=\frac 2 a,$ and { $\int_1^{\infty}t^{-3/2}dt=2,$} we get
\begin{eqnarray*}
\mathbb{E}\left[{ \sup_{u_{\min} \leq
 u \leq u_{\max}}}\|T(u)-T^*(u|T)\|_2^2\right]&\leq &{ t_n+ \frac{2K\beta^2  (\log k_n)^2}{\mathcal C_1 k_n }+\frac{4K \beta^2 (\log k_n)^2}{\mathcal C_2^2 k_n } + \frac{2 \mathcal C_3K}{k_n^{5/2} }}\\
& \leq & \frac{c_1 K \log k_n}{k_n}+ \frac{2 K  \beta^2 (\log k_n)^2}{\mathcal C_1 k_n }+\frac{4K \beta^2(\log k_n)^2}{\mathcal C_2^2 k_n } + \frac{2 \mathcal C_3K}{k_n^{5/2} }\\
& \leq & \frac{\mathcal C_4 K (\log k_n)^2}{k_n}. 
\end{eqnarray*}

\subsection{Proof of Proposition \ref{prp:bias}}
Let $\mathbf x$ fixed, then, 
\[
\|\theta^*(\mathbf x) - \theta_0(\mathbf x)\|_\infty = \|\sum_{\ell=1}^{K_{\max}}\left(\theta^*_\ell - \theta_{0}(\mathbf x)  \right) \mathbf 1_{\mathbf x \in \mathcal T_\ell}\|_\infty \leq \sum_{\ell=1}^{K_{\max}}\|\theta^*_\ell - \theta_{0}(\mathbf x) \|_\infty \mathbf 1_{\mathbf x \in \mathcal T_\ell} \, .
\]

Now, from Taylor expansion, for $\ell=1,\ldots, K$, conditionally on $\mathbf X \in \mathcal T_\ell$, 
\begin{eqnarray*}
\nabla_\theta L^\ell(\theta_{0}(\mathbf x),u) &= & \nabla_\theta L^\ell(\theta^*_\ell,u) +  \nabla^2_\theta L^\ell (\widetilde{\theta}_\ell)(\theta_{0}(\mathbf x) - \theta^*_\ell)^\tau \\
&=& 0 +  \mathbb{E}\left[\left(\begin{array}{cc} \partial_\sigma g_{\tilde{\sigma}_1,\gamma}(Y-u) & \partial_{\gamma} g_{\sigma,\tilde{\gamma}_1}(Y-u) \\ 
\partial_\sigma h_{\tilde{\sigma}_2,\gamma}(Y-u) & \partial_{\gamma} h_{\sigma,\tilde{\gamma}_2}(Y-u)\end{array}\right)\mathbf{1}_{Y\geq u} \mid \mathbf X \in \mathcal T_\ell \right](\theta_{0}(\mathbf x) - \theta^*_\ell)^\tau \\
\end{eqnarray*}
for some parameters $\tilde \gamma_j$ (resp. $\tilde \sigma_j$)  between $\gamma_{0}(\mathbf x)$ and $\gamma^*_\ell$ (resp. $\sigma_{0}(\mathbf x)$ and $\sigma^*_\ell$).

Thus, under Assumption \ref{a_moche},  
\begin{eqnarray*}
\|\theta_{0}(\mathbf x) - \theta^*_\ell\|_\infty &\leq&  \frac{1}{\mathfrak C_1} \|\nabla_\theta L^\ell(\theta_{0}(\mathbf x),u) \|_\infty \\
&\leq&  \frac{1}{\mathfrak C_1}\frac{k_n}{n} \max\left(| \esp\left[g_{\theta_{0}(\mathbf x)}(Z) \mid  \mathbf X \in \mathcal T_\ell\right]|,\esp\left[h_{\theta_{0}(\mathbf x)}(Z) \mid  \mathbf X \in \mathcal T_\ell\right] \right) \,,
\end{eqnarray*}
where $Z$ is a random variable distributed according to the distribution $F_u$ defined in Section \ref{sec_evtreg} with $\sigma_0(\mathbf x) =u\gamma_0(\mathbf x)$ and with 
\begin{eqnarray*}
\esp\left[g_{\theta_{0}(\mathbf x)}(Z) \mid  \mathbf X \in \mathcal T_\ell\right] &=& -\frac{1}{u \gamma_{0}(\mathbf x)} + \frac{1}{u^2\gamma_{0}(\mathbf x)} \left(1+\frac{1}{\gamma_{0}(\mathbf x)} \right) \esp\left[ \frac{Z}{1+Z/u}\mid \mathbf X \in \mathcal T_\ell \right] \\
\esp\left[h_{\theta_{0}(\mathbf x)}(Z) \mid  \mathbf X \in \mathcal T_\ell\right] &=&-\frac{1}{\gamma_{0}(\mathbf x)^2}\esp\left[ \log(1+Z/u) \mid \mathbf X \in \mathcal T_\ell \right] \\
&&+ \frac{1}{u\gamma_{0}(\mathbf x)}\left(1+\frac{1}{\gamma_{0}(\mathbf x)} \right) \esp\left[ \frac{Z}{1+Z/u}\mid  \mathbf X \in \mathcal T_\ell\right] \,. 
\end{eqnarray*}

Under Assumption \ref{assum:rv:second}, we have
\begin{equation*}
\overline{F}_{u}(z) = \left( 1+\frac{z}{u}\right)^{-1/\gamma_{0}(\mathbf x)} \left \{1 + c\psi(u) \int_1^{1+z/u} v^{\rho-1} \mathd v + o(\psi(u))\right\} \,.
\end{equation*}
\begin{eqnarray*}
\esp\left[ \frac{Z}{1+Z/u}\mid  \mathbf X \in \mathcal T_\ell\right] 
&=& \int_0^u \overline{F}_u \left(\frac{t}{1-t/u} \right) \mathd t\\
&=& \frac{u}{1+1/\gamma_0(\mathbf x)} \left(1 + \frac{ c\psi(u)}{1+1/\gamma_{0}(\mathbf x)-\rho} + o(\psi(u)) \right)\\
&\leq& u \left( 1 + c\gamma_{0}(\mathbf x)\psi(u) + o(\psi(u)) \right)
\end{eqnarray*}

and then
\begin{eqnarray*}
\esp\left[ \log(1+Z/u) \mid  \mathbf X \in \mathcal T_\ell \right]
&=& \int_0^u \mathbb{P}\left[Z \geq u(\mathe^t-1) \mid  \mathbf X \in \mathcal T_\ell  \right] \mathd t \\
&=& \gamma_{0}(\mathbf x)\left(1 + \frac{c\psi(u)}{1/\gamma_{0}(\mathbf x)-\rho}  + o(\psi(u)) \right)\\
&\leq& \gamma_{0}(\mathbf x) \left(1+c\gamma_{0}(\mathbf x)\psi(\mathbf x)(u) + o(\psi(u))  \right) \, .
\end{eqnarray*}

Consequently, 
\begin{eqnarray*}
|\esp\left[g_{\theta_0(\mathbf x)}(Z) \mid  \mathbf X \in \mathcal T_\ell \right] | \leq \frac{1}{\gamma_{\min}}\left(1+\frac{1}{u}\left(1+\frac{1}{\gamma_{\min}}\right) \right) \left( 1 + c\gamma_{0}(\mathbf x) \psi(u) + o(\psi(u)) \right)
\end{eqnarray*}
and 

\begin{eqnarray*}
|\esp\left[h_{\theta_0(\mathbf x)}(Z) \mid  \mathbf X = \mathbf x\right]| \leq \frac{1}{\gamma_{\min}}\left(1 +\frac{1}{\gamma_{\min}}+\frac{\gamma_{\max}}{\gamma_{\min}}\right)  \left( 1 + c\gamma_0(\mathbf x) \psi(u ) + o(\psi(u)) \right) \, . 
\end{eqnarray*}
Hence, 
\begin{eqnarray*}
\|\theta_{0}(\mathbf x) - \theta^*_\ell\|_\infty
&\leq& \mathfrak C_2(u)\frac{k_n}{n}  \left( 1 + c\gamma_{\max} \psi(u) + o(\psi(u)) \right) \,,
\end{eqnarray*}
where $\mathfrak C_2(u)=\frac{1}{\mathfrak C_1}\frac{1}{\gamma_{\min}}\max\left(1+\frac{1}{u}+\frac{1}{u\gamma_{\min}},1+\frac{1}{\gamma_{\min}}+\frac{\gamma_{\max}}{\gamma_{\min}} \right)$.

Finally, 
\begin{eqnarray*}
\|\theta^*(\mathbf x) - \theta_0(\mathbf x)\|_\infty  &\leq& \sum_{\ell=1}^{K_{\max}}\|\theta^*_\ell - \theta_{0}(\mathbf x) \|_\infty \mathbf 1_{\mathbf x \in \mathcal T_\ell}\\
&\leq& \mathfrak C_2(u)\frac{k_n}{n}  \left( 1 + c\gamma_{\max} \psi(u) + o(\psi(u)) \right)\sum_{\ell=1}^{K_{\max}} \mathbf 1_{\mathbf x \in \mathcal T_\ell}\\
&\leq& \mathfrak C_2(u)\frac{k_n}{n}  \left( 1 + c\gamma_{\max} \psi(u) + o(\psi(u)) \right) \, .
\end{eqnarray*}

\subsection{Proof of Theorem \ref{thselection}}
\label{sec3}

The following lemma will be needed to prove Theorem \ref{thselection}. 
\begin{lemma}
\label{pk} Let $\mathfrak D = \inf_u\inf_{K < K_0(u)} \Delta L(T^*(u),T^*_K(u))$ and $ u \in [u_{\min},u_{\max}]$ fixed. Suppose that there exists a constant $c_2>0$ such that the penalization constant $\lambda$ satisfies 
\[
 c_2  \{\log k_n\}^{1/2} k_n^{-1/2} \leq \lambda
\leq(\mathfrak{D} - 2c_2 \{\log(k_n)\}^{1/2} k_n^{-1/2})k_n^{-1},
\]
then, for $K> K_0(u),$
\begin{eqnarray*}
\mathbb{P}(\widehat{K}(u)=K) &\leq & 2\left(\exp\left(- \frac{ { C_1}  k_n \lambda^2(K-K_0(u))^2}{\beta^2 (\log k_n)^{2}} \right) + \exp\left(-\frac{{ C_2} k_n \lambda(K-K_0(u)))}{\beta \log k_n} \right)\right) \\
&& +\frac{{ C_3}}{k_n^{5/2} \lambda^3(K-K_0(u))^3},
\end{eqnarray*}
and, for $K<K_0(u),$
\begin{eqnarray*}
\mathbb{P}(\widehat{K}(u)=K) &\leq & 4\exp\left(- \frac{ C_1  k_n \{\mathfrak{D}-\lambda(K_0(u)-K)\}^2}{\beta^2 (\log k_n)^{2}} \right) \\
&&+4 \exp\left(-\frac{C_2 k_n \{\mathfrak{D}-\lambda(K_0(u)-K)\}}{\beta \log k_n} \right)\\
&& +\frac{2C_3}{k_n^{5/2} \{\mathfrak{D}-\lambda(K_0(u)-K)\}^3}.
\end{eqnarray*}
\end{lemma}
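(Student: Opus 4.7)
My plan is to translate the event $\{\widehat{K}(u)=K\}$ for $K\neq K_0(u)$ into a large-deviation event for the discrepancy $L_n - L$, and then invoke the concentration results of Theorems \ref{th1} and \ref{th2}. The argument naturally splits into the cases $K>K_0(u)$ and $K<K_0(u)$.

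\textbf{Rewriting the event.} On $\{\widehat{K}(u)=K\}$, the definition of $\widehat{K}(u)$ forces $L_n(T_K,u)-L_n(T_{K_0(u)},u)\ge \lambda(K-K_0(u))$. Introducing $\mathcal E_{K'}=L_n(T_{K'},u)-L(T^*_{K'},u)$, this rearranges to
\[
\mathcal E_K-\mathcal E_{K_0(u)}\ \ge\ \lambda\bigl(K-K_0(u)\bigr)-\Delta L\bigl(T^*_K,T^*_{K_0(u)}\bigr).
\]
When $K>K_0(u)$, the definition of $K_0(u)$ gives $\Delta L(T^*_K,T^*_{K_0(u)})\le 0$, so the right-hand side is at least $t:=\lambda(K-K_0(u))$. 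When $K<K_0(u)$, the definition of $\mathfrak D$ gives $\Delta L(T^*_K,T^*_{K_0(u)})\le -\mathfrak D$, so the right-hand side is at least $t:=\mathfrak D-\lambda(K_0(u)-K)$, which is positive by the upper bound assumed on $\lambda$. In either case, at least one of $\mathcal E_K$ or $-\mathcal E_{K_0(u)}$ must exceed $t/2$, so
\[
\mathbb P\bigl(\widehat{K}(u)=K\bigr)\ \le\ \mathbb P(\mathcal E_K\ge t/2)+\mathbb P(\mathcal E_{K_0(u)}\le -t/2).
\]

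\textbf{Deviation bound on $|\mathcal E_{K'}|$.} Since $\widehat\theta^{K'}$ and $\theta^{*K'}$ are maximizers of $L_n(T_{K'},\cdot)$ and $L(T_{K'},\cdot)$ respectively on the same piecewise-constant class, we have $|\mathcal E_{K'}|\le \sup_\theta|L_n(T_{K'},\theta)-L(T_{K'},\theta)|$. Writing
\[
L_n(T_{K'},\theta)-L(T_{K'},\theta)=\tfrac{1}{k_n}\sum_{i=1}^n\bigl\{\phi(Y_i-u,\theta(X_i))\mathbf 1_{Y_i>u}-\mathbb E[\phi(Y-u,\theta(X))\mathbf 1_{Y>u}]\bigr\},
\]
and splitting each summand according to whether $\Phi(Y_i)\le M_n=\beta\log k_n$ or not, Theorem \ref{th1} applied to the truncated part and Theorem \ref{th2} to the tail part (both on the class $\mathfrak F$ of \eqref{eq_frakF}) yield
\[
\mathbb P(|\mathcal E_{K'}|\ge s)\ \le\ 2\Bigl(\exp\bigl(-C_1 k_n s^2/(\beta^2(\log k_n)^2)\bigr)+\exp\bigl(-C_2 k_n s/(\beta\log k_n)\bigr)\Bigr)+\frac{C_3}{k_n^{5/2} s^3}.
\]

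\textbf{Combining.} Applying this bound with $s=t/2$ to each of the two summands in the rewriting of $\mathbb P(\widehat K(u)=K)$, and absorbing factor-of-two constants into $C_1,C_2,C_3$, gives the two announced estimates. The main obstacle I foresee is keeping the deviation bound in the second step free of a spurious factor $K'$: the class $\mathfrak F$ contains only the indicator of a single leaf at a time, while $L_n(T_{K'})-L(T^*_{K'})$ is naturally a supremum over a $K'$-dimensional parameter vector. The correct way to avoid a loss by a union bound over leaves is to treat it as a \emph{single} empirical average whose integrand is piecewise constant on disjoint leaf-supports, so that the covering-number estimates behind Theorem \ref{th1} furnish the bound uniformly over the (data-dependent) size-$K'$ tree partitions without an extra factor $K'$ in the exponent.
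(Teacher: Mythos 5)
Your proposal is correct and follows essentially the same route as the paper: rewrite the event $\{\widehat K(u)=K\}$ as a deviation inequality, use $\Delta L(T^*_K,T^*)\le 0$ for $K>K_0(u)$ and $\le -\mathfrak D$ for $K<K_0(u)$, and conclude with the concentration bounds of Theorems \ref{th1} and \ref{th2}, the assumed bounds on $\lambda$ serving precisely to keep the deviation levels above the thresholds $\mathfrak c_1\{\log k_n\}^{1/2}k_n^{-1/2}$ of those theorems (not merely to ensure positivity, as you state). The remaining differences are cosmetic: the paper telescopes $L_n(T_K)-L_n(T_{K_0})$ through $L_n(T^*_K)$ and $L_n(T^*)$ (using $T^*_K=T^*$ when $K>K_0(u)$) instead of your centering $\mathcal E_{K'}=L_n(T_{K'})-L(T^*_{K'})$ with the M-estimation sandwich, which only alters absolute constants, and the factor-$K$ subtlety you flag at the end is equally present (and glossed over) in the paper's own application of Theorems \ref{th1}--\ref{th2} to whole-tree log-likelihood differences.
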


 \begin{proof}

Let  $u \in [u_{\min},u_{\max}]$ fixed. If $\widehat{K}(u)=K,$ this means that
\[
\Delta L_n(T_K(u),T_{K_0(u)}(u)): = L_n(T_K,u)-L_n(T_{K_0(u)},u)>\lambda (K-K_0(u)      ).
\]
Decompose  
\begin{eqnarray*}
\Delta L_n(T_K(u),T_{K_0}(u))&=&\{L_n(T_K,u)-L_n(T^*_K,u)\}+\{L_n(T^*_K,u)-L_n(T^*,u)\}\\ &&+\{L_n(T^*,u)-L_n(T_{K_0(u)},u)\}.
\end{eqnarray*}
Since $L_n(T^*,u)-L_n(T_{K_0(u)},u)<0,$
\[
\Delta L_n(T_K(u),T_{K_0(u)}(u))\leq \{L_n(T_K,u)-L_n(T^*_K,u)\}+\{L_n(T^*_K,u)-L_n(T^*,u)\}.
\]
For $K > K_0(u),$ $T^*_K(u)=T^*(u),$ hence, 
\begin{eqnarray*}
\mathbb{P}(\widehat{K}(u)=K)&\leq &\mathbb{P}\left(\Delta L_n(T_K(u), T^*_K(u))>\lambda (K-K_0(u))\right)\\
&\leq &\mathbb{P}\left( \left|\Delta L_n(T_K(u), T^*_K(u)) - \Delta L( T_K(u), T^*_K(u))\right|>\lambda (K-K_0(u))\right).
\end{eqnarray*}
For $K>K_0(u)$, a bound is then obtained from Theorems \ref{th1} and \ref{th2} if $\lambda(K-K_0(u)) \geq {c_1}  \{\log(k_n)\}^{1/2} k_n^{-1/2}$, that is $\lambda\geq {c_1}  \{\log k_n\}^{1/2} k_n^{-1/2} $.

Now, for $K<K_0(u),$
\begin{eqnarray*}
\Delta L_n(T^*_K(u),T^*(u)) &\leq&  |\Delta L_n(T^*_{K}(u),T^*(u))-\Delta L(T^*_{K}(u),T^*(u))|+\Delta L(T^*_{K}(u),T^*(u))\\
& \leq & |\Delta L_n(T^*(u),T^*_{K}(u))-\Delta L(T^*(u),T^*_K(u))| - \mathfrak D(K_0(u),K).
\end{eqnarray*}
where $\mathfrak D = \inf_{K<K_0(u), u\in[u_{\min}, u_{\max}]} \mathfrak D(K_0(u),K),$ 
Hence, 
\begin{eqnarray*}
\lefteqn{\mathbb{P}(\widehat{K}(u)=K)}\\
 &\leq &\mathbb{P}\left(\Delta L_n(T_K(u), T^*_K(u))\geq \frac{\mathfrak D- \lambda(K_0(u)-K)}{2}\right) \\
&& + \mathbb{P}\left(|\Delta L_n(T^*(u),T^*_K(u))-\Delta L(T^*(u),T^*_K(u))|\geq \frac{\mathfrak D - \lambda(K_0(u)-K)}{2}\right) \\
&\leq &\mathbb{P}\left(\left|\Delta L_n(T_K(u), T^*_K(u))-\Delta L( T_K(u), T^*_K(u))\right|\geq \frac{\mathfrak D - \lambda(K_0(u)-K)}{2}\right) \\
&& + \mathbb{P}\left(|\Delta L_n(T^*(u),T^*_K(u))-\Delta L(T^*(u),T^*_K(u))|\geq \frac{\mathfrak D - \lambda(K_0(u)-K)}{2}\right).
\end{eqnarray*}
These two probabilities can be bounded using Theorems \ref{th1} and \ref{th2} provided that, for all $K<K_0(u),$
\[
\frac{\mathfrak D - \lambda(K_0(u)-K)}{2} \geq \mathfrak c_1 \{\log(k_n)\}^{1/2} k_n^{-1/2},
\]
that is, 
\[
\lambda \leq  \mathfrak D - 2{\mathfrak c_1} \{\log(k_n)\}^{1/2} k_n^{-1/2} . 
\]
\end{proof}

We are now ready to prove Theorem \ref{thselection}. Let $u \in [u_{\min} , u_{\max}]$ fixed.
\begin{eqnarray*}
\esp \left[ \|\widehat{T}(u)-T^*(u)\|_{2}^2\right]&=&\sum_{K=1}^{K_{\max}}\esp\left[\|T_K(u)-T^*(u)\|_2^2\mathbf{1}_{\widehat{K}(u)=K}\right]\\
&\leq& \esp\left[\|T_{K_0(u)}(u)-T^*(u)\|_2^2\right] + \sum_{K=1, K\neq K_0(u)}^{K_{\max}}K \mathbb{P}(\widehat{K}(u)=K) \\
&& +\sum_{K=1, K\neq K_0(u)}^{K_{\max}}\mathbb{E}\left[\|T_K(u)-T^*(u)\|_2^2 \mathbf{1}_{\|T_K(u)-T^*(u)\|_2^2> K}\mathbf{1}_{\widehat{K}(u)=K}\right]\\
&\leq & \esp\left[\|T_{K_0(u)}(u)-T^*(u)\|_2^2\right] + \sum_{K=1}^{K_0(u)-1}K \mathbb{P}(\widehat{K}(u)=K)\\
&& + \sum_{K=K_0(u)+1}^{K_{\max}}K \mathbb{P}(\widehat{K}(u)=K)\\
&& +2\sum_{K=1, K\neq K_0(u)}^{K_{\max}}\mathbb{E}\left[\|T_K(u)-T^*_K(u)\|_2^2\mathbf{1}_{\|T_K(u)-T^*(u)\|_2^2> K}\right]\\
&&+2\sum_{K=1, K\neq K_0(u)}^{K_{\max}}\mathbb{P}(\widehat{K}(u)=K) \|T^*(u)-T_K^*(u)\|_2^2 .
\end{eqnarray*}
Firstly, from Theorem \ref{thm:deviation:bounds},
\begin{eqnarray*}
\lefteqn{\mathbb{E}\left[\|T_K(u)-T_K^*(u)\|_2^2 \mathbf{1}_{\|T_K(u)-T^*(u)\|_2^2> K}\right]}\\
 & = & K \mathbb{P}\left(\|T_K(u)-T_K^*(u)\|_2^2 > K\right) + \int_K^{\infty} \mathbb{P}\left(\|T_K(u)-T_K^*(u)\|_2^2 > t\right) \mathd t\\
  &\leq& 2 K\left(1+\frac{\beta^2 (\log k_n)^2}{\mathcal C_1 k_n}\right)\exp\left(-\frac{\mathcal C_1 k_n }{ \beta^2 (\log k_n)^2}\right)\\
  && +2K\left( 1+ \frac{2\beta (\log k_n)}{\mathcal C_2 k_n}+\frac{2\beta^2 (\log k_n)^2}{\mathcal C_2^2 k_n^2}\right)\exp\left(-\frac{\mathcal C_2 k_n}{\beta (\log k_n)}\right) + \frac{2\mathcal C_3 K^{1/2}}{k_n^{5/2} }\, .
\end{eqnarray*}
Secondly, recall that 
\[
\|T^*_K(u)-T^*(u)\|^2_2 = \int \|\theta^{K*}(\mathbf x)-\theta^{*}(\mathbf x)\|^2_{\infty}\mathd \mathbb P(\mathbf x) \leq K_{\max} \sum_{\ell=1}^{K_{\max}}\|\mu(\mathcal T_\ell)\theta^{K*}_\ell-\theta^{*}_\ell\|_\infty^2  \, ,
\]
where $\mu(\mathcal T_\ell) = \mathbb{P}(\mathbf X \in \mathcal T_\ell)$. 
Following the same idea as in the proof of Proposition  \ref{prp:bias}, from Taylor's expansion, under Assumptions \ref{a_moche} and \ref{assum:rv:second},

\begin{eqnarray*}
\|\theta^{K*}_{\ell} - \theta^*_\ell\|_\infty^2  
& \leq & \mathfrak C^2_2(u)\frac{k_n^2}{n^2}  \left( 1 + c\gamma_{\max} \psi(u) + o(\psi(u)) \right)^2 \,.
\end{eqnarray*}

Hence, 
\begin{eqnarray*}
\|T^*_K(u)-T^*(u)\|_2^2
&\leq& \mathfrak C^2_2(u)\frac{k_n^2}{n^2}  (1 + c\gamma_{\max} \psi(u) + o(\psi(u)))^2\sum_{\ell=1}^{K_{\max}}  \mathbf 1_{\mathbf x \in \mathcal T_\ell}\\
&\leq& \mathfrak C_3(u)\frac{k_n^2}{n^2} \,.
\end{eqnarray*}

Finally,  
\begin{eqnarray*}
\esp\left[\|\widehat{T}(u)-T^*(u)\|_{2}^2 \right] &\leq & \frac{\mathcal{C}_5  K_0(u) (\log k_n)^2 }{k_n},
\end{eqnarray*}
for some constant $\mathcal{C}_5.$ .

\section{Covering numbers}
\label{sec_covering}

\begin{lemma}
\label{lem:cov:numbers}
Following the notations of the proof of Theorem \ref{th1}, the class of functions $\mathfrak{F}$ satisfies
$$\mathcal{N}_{\Phi}(\varepsilon,\mathfrak{F})\leq \frac{\mathfrak C_4K^{4(d+1)(d+2)}\|\Phi \|_2^{\alpha_1}\sigma_{n}^{\alpha}}{\varepsilon^{\alpha}},$$
for some constants $\mathfrak C_4>0$ and $\alpha>0$ (not depending on $n$ nor $K$). 
\end{lemma}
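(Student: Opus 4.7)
The plan is to exploit the product structure of $\mathfrak{F}$. Each element factors as $(y,\mathbf{x})\mapsto f_\theta(y-u)\,\mathbf{1}_{y\geq u}\,\mathbf{1}_{\mathbf{x}\in\mathcal{T}_\ell}$, so I would write
\[
\mathfrak{F}\subseteq \mathfrak{F}_1\cdot\mathfrak{F}_2\cdot\mathfrak{F}_3,
\]
with $\mathfrak{F}_1=\{y\mapsto f_\theta(y-u):(\theta,u)\in\Theta\times[u_{\min},u_{\max}]\}$ (envelope proportional to $\Phi$), $\mathfrak{F}_2=\{y\mapsto\mathbf{1}_{y\geq u}\}$, and $\mathfrak{F}_3=\{\mathbf{x}\mapsto\mathbf{1}_{\mathbf{x}\in\mathcal{T}_\ell}\}$ (the last two bounded by $1$). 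The standard product rule for uniform covering numbers (applied via the measure with density $\Phi^2/\int\Phi^2\,d\mathbb{Q}$ to reweight the indicator classes) gives
\[
\mathcal{N}_\Phi(3\varepsilon,\mathfrak{F})\leq \mathcal{N}_\Phi(\varepsilon,\mathfrak{F}_1)\cdot \mathcal{N}_1(\varepsilon,\mathfrak{F}_2)\cdot \mathcal{N}_1(\varepsilon,\mathfrak{F}_3),
\]
so it remains to bound each factor separately.

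For $\mathfrak{F}_1$, I would use a parametric net argument. Direct differentiation of $\phi(z,\theta)$ and of its gradient components $g_\theta$, $h_\theta$ with respect to $(\sigma,\gamma,u)$ yields a uniform pointwise bound of the form
\[
\bigl|f_{\theta'}(y-u')-f_\theta(y-u)\bigr|\leq C\,\sigma_n^{a}\,\Phi(y)\,\bigl\|(\theta',u')-(\theta,u)\bigr\|_\infty,
\]
for some fixed $a\geq 0$, where the $\sigma_n^{a}$ absorbs the dependence on $\sigma^{-1}$ (using $\sigma\leq \sigma_n$) and $\gamma_{\min}^{-1}$. An $\varepsilon/(C\sigma_n^{a})$-net on $\Theta\times[u_{\min},u_{\max}]$ has cardinality $O(\sigma_n^{\alpha_2}/\varepsilon^{3})$ and yields an $\varepsilon\|\Phi\|_{L^2(\mathbb{Q})}$-cover of $\mathfrak{F}_1$, producing the factor $\sigma_n^{\alpha}/\varepsilon^{\alpha}$ in the claim. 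The class $\mathfrak{F}_2$ is a VC class of dimension $1$ (half-lines on $\mathbb{R}$), so $\mathcal{N}_1(\varepsilon,\mathfrak{F}_2)\leq C\varepsilon^{-2}$. For $\mathfrak{F}_3$, each $\mathcal{T}_\ell$ is a hyperrectangle in $\mathbb{R}^d$, that is an intersection of at most $2d$ coordinate half-spaces; the class of all hyperrectangles is VC of dimension $O(d)$, so its covering number is $\leq C\varepsilon^{-4d}$. Since $\ell$ ranges over at most $K$ leaves of a binary tree whose $K-1$ internal splits each pick one of $d$ coordinates and a threshold with at most $O(K)$ relevant (data-aligned) values, a union-bound plus combinatorial count over the admissible tree topologies yields the polynomial factor $K^{4(d+1)(d+2)}$.

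The main obstacle is the Lipschitz control in $\mathfrak{F}_1$. Differentiating $\phi(z,\theta)=-\log\sigma-(1+1/\gamma)\log(1+\gamma z/\sigma)$ (and similarly $g_\theta$, $h_\theta$) produces expressions containing $\log(1+\gamma z/\sigma)$, $z/(\sigma+\gamma z)$ and negative powers of $\sigma$ and $\gamma$, so pinning down a pointwise domination by $C\sigma_n^{a}\Phi(y)$ with $\Phi$ as defined just before Assumption~\ref{a_phi} requires careful algebra exploiting $\sigma\in[\sigma_{\min},\sigma_n]$, $\gamma\in[\gamma_{\min},\gamma_{\max}]$, and the inequality $\log(1+\gamma z/\sigma)\leq C\Phi(z+u_{\max})$. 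The combinatorial step for $\mathfrak{F}_3$ is less delicate once one observes that leaves of a binary tree with splits on single coordinates are fully determined by the ordered sequence of (coordinate, threshold) choices, but extracting the exact exponent $4(d+1)(d+2)$ still requires a careful count of split orientations per internal node.
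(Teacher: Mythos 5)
Your proposal is correct and follows essentially the same route as the paper: factor $\mathfrak{F}$ into a Lipschitz-in-$(\theta,u)$ parametric class, the half-line indicators, and the leaf indicators, bound each covering number separately (a parametric net / Example 19.7 of \citep{vandervaart} for the first, VC bounds for the indicator classes), and recombine with a product-preservation lemma (the paper uses Lemma A.1 of \citep{einmahl2005uniform}). The only substantive differences are that the paper obtains the $K^{4(d+1)(d+2)}$ factor directly by citing Lemma~4 of \citep{lopez2016} rather than re-deriving it through the tree-topology count you sketch (which in any case is not needed for leaves that are hyperrectangles, since $K\geq 1$), and that the $\sigma_n^{\alpha}$ factor arises from the diameter of the parameter interval $[\sigma_{\min},\sigma_n]$ covered by the net, not from the Lipschitz constants, which are controlled by $\sigma_{\min}$ and $\gamma_{\min}$ alone.
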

\begin{proof}
Let
\begin{eqnarray*}
g_{\theta}(z) &=& -\frac{1}{\sigma}+\left(\frac{1}{\gamma}+1\right)\frac{\gamma z}{\sigma^2(1+\frac{z\gamma}{\sigma})}, \\
h_{\theta}(z) &=& -\frac{1}{\gamma^2}\log\left(1+\frac{z\gamma}{\sigma} \right)+\frac{\left(\frac{1}{\gamma}+1\right)z}{\sigma+z\gamma},
\end{eqnarray*}
for $z>0.$
For $\theta$ and $\theta'$ in $\mathcal{S}\times \Gamma,$ we have (from a straightforward Taylor expansion),
\[
 |g_\theta(y-u) - g_{\theta'}(y-u)|  \leq  C |\gamma - \gamma'| + C'|\sigma-\sigma'|,
\]
for some constants $C$ and $C'.$ More precisely, one can take 
\begin{eqnarray*}
C &=& \frac{6}{\gamma_{\min}^2\sigma_{\min}},\\
C' &=& \frac{1}{\sigma_{\min}^2}\left(1+3\left\{1+\frac{1}{\gamma_{\min}}\right\}\right).
\end{eqnarray*} Next, observe that
$$|g_{\theta'}(y-u) - g_{\theta'}(y-u')|\leq C''|u-u'|,$$
where $C''=4\gamma^2_{\max}/[\gamma_{\min}\sigma^3].$
Which leads to
$$ |g_\theta(y-u) - g_{\theta'}(y-u')|  \leq  C_g \max(\|\theta-\theta'\|_{\infty},|u-u'|),$$
for some constant $C_g>0.$
Similarly,
\[
|h_\theta(y-u) - h_{\theta'}(y-u)|  \leq C_1(4+\log(1+wy))|\gamma - \gamma' | + C_2|\sigma-\sigma'|,
\]
 Next,
$$|h_{\theta'}(y-u) - h_{\theta'}(y-u')|\leq  C_7 |u-u'|,$$
where $C_7=5/(\gamma_{\min}\sigma_{\min}),$
leading to, for some $C_h>0,$
$$|h_\theta(y-u) - h_{\theta'}(y-u')|\leq C_h \max(\|\theta-\theta'\|_{\infty},|u-u'|).$$

On the other hand,
$$|\phi(y-u,\theta)-\phi(y-u,\theta')|\leq\frac{1}{\gamma_{\min}^2}(2+\log(1+wy))|\gamma-\gamma'|+ \frac{3}{\gamma_{\min}\sigma_{\min}}|\sigma-\sigma'|,$$
and 
$$|\phi(y-u,\theta')-\phi(y-u',\theta')|\leq \frac{1}{\sigma_{\min}}|u-u'|.$$

Define $\mathfrak{F}_1=\{g_{\theta}(\cdot-u):\theta \in \mathcal{S}\times \Gamma, u\in [u_{\min},u_{\max}]\},$ $\mathfrak{F}_2=\{h_{\theta}(\cdot-u):\theta \in \mathcal{S}\times \Gamma, u\in [u_{\min},u_{\max}]\},$ and $\mathfrak{F}_3=\{\phi(\cdot-u,\theta):\theta \in \mathcal{S}\times \Gamma, u\in [u_{\min},u_{\max}]\}.$ From Example 19.7 in \cite{vandervaart}, 
we get, for $i=1,...,3,$
$$N(\varepsilon,\mathfrak{F}_i)\leq F_i \|\Phi \|_2^{\alpha_1}\sigma_{n}^{\alpha_1}\varepsilon^{-\alpha_1},$$
for some $\alpha>0$ and constants $F_i.$

On the other hand, let
$$\mathfrak{F}_4=\left\{\mathbf x \mapsto \mathbf{1}_{\mathbf x \in \mathcal T_\ell} \colon \ell =1,\ldots,K \right\},$$
and
$$\mathfrak{F}_5=\left\{y \mapsto \mathbf{1}_{y>u} \colon u \in \mathcal U \right \}.$$
From Lemma 4 in \citep{lopez2016}, we have $N(\varepsilon,\mathfrak{F}_4)\leq m^k K^{\alpha_2}\varepsilon^{-\alpha_2},$ where $\alpha_2=4(d+1)(d+2),$ and where $k$ is the number of discrete components taking at most $m$ modalities. On the other hand, from Example 19.6 in \citep{vandervaart}, $N(\varepsilon,\mathfrak{F}_5)\leq 2\varepsilon^{-2}.$

From Lemma A.1 in \citep{einmahl2005uniform}, we get, for $i=1,\ldots,3,$
$$N(\varepsilon,\mathfrak{F}_i\mathfrak{F}_4\mathfrak{F}_5)\leq \frac{4 m^kK^{\alpha_2}\max(C_g,C_h)\|\Phi \|_2^{\alpha_1}\sigma_{n}^{\alpha_1}}{\varepsilon^{\alpha_1+\alpha_2+\alpha_3}}.$$
Multiplying $\mathfrak{F}_i\mathfrak{F}_4\mathfrak{F}_5$ by a single indicator function $\mathbf{1}_{\Phi(Y_i)\leq M_n}$ does not change the covering number, and the result follows.
\end{proof}

\section{Technical Lemmas}
\label{sec4}

\begin{lemma}
\label{lemma_vphi}
With $v_{\mathfrak{F}}$ defined in Proposition \ref{talagrand},
$$v_{\mathfrak{F}}\leq \frac{M_n^2k_n}{n}.$$
\end{lemma}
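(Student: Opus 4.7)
The plan is to bound $v_{\mathfrak F}$ by a direct second moment computation, exploiting that every function in $\mathfrak F$ carries the indicator $\mathbf 1_{Y \geq u}$ together with the truncation $\mathbf 1_{\Phi(Y) \leq M_n}$ that is implicit when $v_{\mathfrak F}$ is invoked in the proof of Theorem~\ref{th1}.

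First, I would fix an arbitrary $\varphi \in \mathfrak F$ of the form $y \mapsto f_\theta(y-u)\mathbf 1_{y\geq u}\mathbf 1_{\mathbf x \in \mathcal T_\ell}$ with the extra truncation $\mathbf 1_{\Phi(y) \leq M_n}$, and use the standard inequality $\operatorname{Var}(\|\varphi(\mathbf V)\|_\infty) \leq \mathbb E[\|\varphi(\mathbf V)\|_\infty^2]$. Since the derivatives of $\phi(\cdot-u,\theta)$ (and $\phi$ itself, up to a $\log \sigma_n$ term already absorbed in the definition of $M_n$) are uniformly bounded by $\Phi$, the truncation $\mathbf 1_{\Phi(Y)\leq M_n}$ yields $|\varphi(Y)| \leq M_n \mathbf 1_{Y\geq u}\mathbf 1_{\mathbf X \in \mathcal T_\ell}$. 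Squaring and taking expectation gives
\[
\mathbb E[\|\varphi(Y)\|_\infty^2] \leq M_n^2\, \mathbb P(Y \geq u,\ \mathbf X \in \mathcal T_\ell) \leq M_n^2\, \mathbb P(Y \geq u).
\]

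The final step is to use Assumption~\ref{a_u}: since $u \geq u_{\min}$, one has $\mathbb P(Y\geq u) \leq \mathbb P(Y\geq u_{\min}) = k_n/n$. Taking the supremum over $\varphi \in \mathfrak F$ delivers the stated bound $v_{\mathfrak F} \leq M_n^2 k_n/n$. There is no real obstacle here; the only point that requires care is to remember that the class being considered inside the Talagrand inequality in Theorem~\ref{th1} is the truncated one (so that the uniform bound $M_n$ is available) and that the indicator $\mathbf 1_{Y\geq u}$ supplies the crucial factor $k_n/n$ that improves upon a trivial $M_n^2$ bound on the variance.
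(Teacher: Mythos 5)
Your argument is correct and follows essentially the same route as the paper: bound the variance by the second moment, use the truncation $\mathbf 1_{\Phi(Y)\leq M_n}$ to get the uniform bound $M_n$, and use the indicator $\mathbf 1_{Y\geq u}$ with $u\geq u_{\min}$ and Assumption \ref{a_u} to obtain the factor $\mathbb P(Y\geq u_{\min})=k_n/n$. Your parenthetical care about the truncated class and the $\log\sigma_n$ contribution of $\phi$ is a point the paper's own one-line proof glosses over, but it does not change the argument.
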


\begin{proof}
We have
\begin{eqnarray*}
v_{\mathfrak{F}} &\leq&  \mathbb{E}\left[\Phi(Y)^2 \mathbf{1}_{Y\geq u_{\min}}\mathbf{1}_{\Phi(Y)\leq M_n}\right] \\
&\leq & M_n^2 \mathbb{P}(Y\geq u_{\min})=\frac{M_n^2k_n}{n}.
\end{eqnarray*}
 \end{proof}

\begin{lemma}
\label{lemma_moments}
Define, for $j=1,2,3,$
$$E_{j,n}=\mathbb{E}\left[\Phi(Y)^j \mathbf{1}_{\Phi(Y)\geq M_n}\mathbf{1}_{Y\geq u_{\min}}\right].$$
Under the assumptions of Theorem \ref{th2}, $$E_{j,n}\leq \frac{\mathfrak{e}_j k_n^{1/2}}{n^{1/2}n^{\rho_0\beta a_2/4}}.$$
\end{lemma}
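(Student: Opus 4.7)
The plan is to split $E_{j,n}$ into a tail probability factor (which gives the $k_n^{1/2}/n^{1/2}$ term) and a tail expectation of $\Phi$ (which gives the $n^{-\rho_0\beta a_2/4}$ term), using Cauchy-Schwarz together with the exponential moment assumption on $\Phi(Y)$.

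First, I would apply the Cauchy-Schwarz inequality to the two indicators:
\begin{equation*}
E_{j,n} \;\leq\; \Bigl(\mathbb{E}\bigl[\Phi(Y)^{2j}\mathbf{1}_{\Phi(Y)\geq M_n}\bigr]\Bigr)^{1/2} \Bigl(\mathbb{P}(Y\geq u_{\min})\Bigr)^{1/2}.
\end{equation*}
By Assumption \ref{a_u}, the second factor equals $(k_n/n)^{1/2}$, which already produces the prefactor $k_n^{1/2}/n^{1/2}$ in the target bound.

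For the first factor, the plan is to use the classical Chernoff-style trick: on the event $\{\Phi(Y)\geq M_n\}$, we have $\mathbf{1}_{\Phi(Y)\geq M_n}\leq \exp\!\bigl(\tfrac{\rho_0}{2}(\Phi(Y)-M_n)\bigr)$, whence
\begin{equation*}
\mathbb{E}\bigl[\Phi(Y)^{2j}\mathbf{1}_{\Phi(Y)\geq M_n}\bigr] \;\leq\; e^{-\rho_0 M_n/2}\, \mathbb{E}\!\left[\Phi(Y)^{2j} e^{\rho_0 \Phi(Y)/2}\right].
\end{equation*}
Since any polynomial is dominated by an exponential, there exists a constant $C_j>0$ such that $z^{2j}e^{\rho_0 z/2}\leq C_j e^{\rho_0 z}$ for all $z\geq 0$. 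Substituting $z=\Phi(Y)$ and invoking Assumption \ref{a_phi} gives
\begin{equation*}
\mathbb{E}\bigl[\Phi(Y)^{2j}\mathbf{1}_{\Phi(Y)\geq M_n}\bigr] \;\leq\; C_j\, m_{\rho_0}\, e^{-\rho_0 M_n/2}.
\end{equation*}

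Finally I would plug in $M_n=\beta\log k_n = \beta a_2 \log n$, so that $e^{-\rho_0 M_n/4} = n^{-\rho_0 \beta a_2/4}$, and combine the two factors to obtain
\begin{equation*}
E_{j,n} \;\leq\; \sqrt{C_j m_{\rho_0}}\,\frac{k_n^{1/2}}{n^{1/2}\, n^{\rho_0\beta a_2/4}},
\end{equation*}
setting $\mathfrak e_j=\sqrt{C_j m_{\rho_0}}$. There is no real obstacle here; the only nonroutine step is the correct choice of the exponential splitting constant ($\rho_0/2$ rather than $\rho_0$), which is exactly what is needed to keep $\mathbb{E}[\Phi(Y)^{2j}e^{\rho_0\Phi(Y)/2}]$ finite while still extracting a sufficiently large negative power of $n$ from $e^{-\rho_0 M_n/2}$ to match the stated exponent $\rho_0\beta a_2/4$ after taking the square root.
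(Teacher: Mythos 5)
Your proof is correct, and it deviates mildly but genuinely from the paper's argument after the first step. Both start with Cauchy--Schwarz to separate $\mathbf{1}_{Y\geq u_{\min}}$ (giving the factor $(k_n/n)^{1/2}$ via Assumption \ref{a_u}), but the paper then applies Cauchy--Schwarz a \emph{second} time to split $\mathbb{E}[\Phi(Y)^{2j}\mathbf{1}_{\Phi(Y)\geq M_n}]$ into $\mathbb{E}[\Phi(Y)^{4j}]^{1/2}\,\mathbb{P}(\Phi(Y)\geq M_n)^{1/2}$ and uses the Chernoff bound $\mathbb{P}(\Phi(Y)\geq M_n)\leq m_{\rho_0}e^{-\rho_0 M_n}$, so the exponent $\rho_0\beta a_2/4$ arises from the quarter power of $n^{-\rho_0\beta a_2}$. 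You instead bound the weighted tail expectation directly by exponential tilting at rate $\rho_0/2$, absorbing the polynomial factor via $z^{2j}e^{\rho_0 z/2}\leq C_j e^{\rho_0 z}$ and then invoking Assumption \ref{a_phi}; the same exponent then comes from the square root of $e^{-\rho_0 M_n/2}$. Your route trades the paper's need for finiteness of $\mathbb{E}[\Phi(Y)^{4j}]$ (itself a consequence of the exponential moment) for the elementary domination constant $C_j$, and it is in fact slightly more flexible: tilting at rate $(1-\varepsilon)\rho_0$ would yield any exponent up to nearly $\rho_0\beta a_2/2$, whereas the double Cauchy--Schwarz caps the rate at $\rho_0\beta a_2/4$; your choice of $\rho_0/2$ is simply what is needed to reproduce the stated constant. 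Your identification $M_n=\beta\log k_n=\beta a_2\log n$ follows the paper's own convention (exact only up to the constant hidden in $k_n=O(n^{a_2})$), so no further issue arises.
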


\begin{proof}
Applying twice Cauchy-Schwarz inequality leads to
$$E_{j,n}\leq \mathbb{P}(Y\geq u_{\min})^{1/2}\esp [\Phi(Y)^{2j}\mathbf{1}_{\Phi(Y)\geq M_n}]^{1/2}\leq \frac{k_n^{1/2}}{n^{1/2}}\esp [\Phi(Y)^{4j}]^{1/4}\mathbb{P}(\Phi(Y)\geq M_n)^{1/4}.$$
Next, from Chernoff inequality,
\begin{equation} \nonumber \mathbb{P}(\Phi(Y)\geq M_n)\leq \exp(-\rho_0 M_n)\esp [\exp(\rho_0 \Phi(Y))]\leq \frac{m_{\rho_0}}{n^{\rho_0\beta a_2}}.
\end{equation}
\end{proof}

\bibliographystyle{abbrvnat}
\bibliography{bibli}

\begin{thebibliography}{34}
\providecommand{\natexlab}[1]{#1}
\providecommand{\url}[1]{\texttt{#1}}
\expandafter\ifx\csname urlstyle\endcsname\relax
  \providecommand{\doi}[1]{doi: #1}\else
  \providecommand{\doi}{doi: \begingroup \urlstyle{rm}\Url}\fi

\bibitem[MRN()]{MRN}
Catastrophe naturelle, assurance et prévention.
\newblock Technical report.

\bibitem[bei(2003)]{beirlant2003}
Regression with response distributions of pareto-type.
\newblock \emph{Computational Statistics \& Data Analysis}, 42\penalty0
  (4):\penalty0 595 -- 619, 2003.
\newblock ISSN 0167-9473.

\bibitem[Allen(1974)]{allen1974relationship}
D.~M. Allen.
\newblock The relationship between variable selection and data agumentation and
  a method for prediction.
\newblock \emph{technometrics}, 16\penalty0 (1):\penalty0 125--127, 1974.

\bibitem[Balkema and De~Haan(1974)]{balkema1974residual}
A.~A. Balkema and L.~De~Haan.
\newblock Residual life time at great age.
\newblock \emph{The Annals of probability}, pages 792--804, 1974.

\bibitem[Beirlant and Goegebeur(2004)]{beirlant2}
J.~Beirlant and Y.~Goegebeur.
\newblock Local polynomial maximum likelihood estimation for {P}areto-type
  distributions.
\newblock \emph{Journal of Multivariate Analysis}, 89\penalty0 (1):\penalty0
  97--118, 2004.

\bibitem[Beirlant et~al.(2004)Beirlant, Goegebeur, Segers, and
  Teugels]{beirlant2004statistics}
J.~Beirlant, Y.~Goegebeur, J.~Segers, and J.~L. Teugels.
\newblock \emph{Statistics of extremes: {T}heory and {A}pplications}.
\newblock John Wiley \& Sons, 2004.
\newblock ISBN 978-0-471-97647-9.

\bibitem[Breiman et~al.(1984)Breiman, Friedman, Stone, and Olshen]{breiman}
L.~Breiman, J.~Friedman, C.~J. Stone, and R.~A. Olshen.
\newblock \emph{Classification and regression trees}.
\newblock CRC press, 1984.

\bibitem[Charpentier et~al.(2021)Charpentier, Barry, and
  James]{charpentier_insurance_2021}
A.~Charpentier, L.~Barry, and M.~R. James.
\newblock Insurance against natural catastrophes: balancing actuarial fairness
  and social solidarity.
\newblock \emph{The Geneva Papers on Risk and Insurance - Issues and Practice},
  May 2021.
\newblock ISSN 1018-5895, 1468-0440.
\newblock URL \url{https://link.springer.com/10.1057/s41288-021-00233-7}.

\bibitem[Chaudhuri and Loh(2002)]{chaudhuri2002}
P.~Chaudhuri and W.-Y. Loh.
\newblock Nonparametric estimation of conditional quantiles using quantile
  regression trees.
\newblock \emph{Bernoulli}, 8\penalty0 (5):\penalty0 561--576, 2002.

\bibitem[Chavez-Demoulin et~al.(2015)Chavez-Demoulin, Embrechts, and
  Hofert]{ChavezDemoulin2015}
V.~Chavez-Demoulin, P.~Embrechts, and M.~Hofert.
\newblock An extreme value approach for modeling operational risk losses
  depending on covariates.
\newblock \emph{Journal of Risk and Insurance}, 83\penalty0 (3):\penalty0
  735--776, 2015.

\bibitem[Chernozhukov(2005)]{chernozhukov2005extremal}
V.~Chernozhukov.
\newblock Extremal quantile regression.
\newblock \emph{The Annals of Statistics}, 33\penalty0 (2):\penalty0 806--839,
  2005.

\bibitem[Coles(2001)]{Coles2001}
S.~Coles.
\newblock \emph{An Introduction to Statistical Modeling of Extreme Values}.
\newblock Springer London, 2001.

\bibitem[Davison and Smith(1990)]{davison1990models}
A.~C. Davison and R.~L. Smith.
\newblock Models for exceedances over high thresholds.
\newblock \emph{Journal of the Royal Statistical Society: Series B
  (Methodological)}, 52\penalty0 (3):\penalty0 393--425, 1990.

\bibitem[De'ath and Fabricius()]{carteco}
G.~De'ath and K.~E. Fabricius.
\newblock Classification and regression trees: {A} powerful yet simple
  technique for ecological data analysis.
\newblock \emph{Ecology}, 81\penalty0 (11):\penalty0 3178--3192.

\bibitem[Einmahl et~al.(2005)Einmahl, Mason, et~al.]{einmahl2005uniform}
U.~Einmahl, D.~M. Mason, et~al.
\newblock Uniform in bandwidth consistency of kernel-type function estimators.
\newblock \emph{The Annals of Statistics}, 33\penalty0 (3):\penalty0
  1380--1403, 2005.

\bibitem[Embrechts et~al.(2013)Embrechts, Klüppelberg, and
  Mikosch]{embrechts2013modelling}
P.~Embrechts, C.~Klüppelberg, and T.~Mikosch.
\newblock \emph{Modelling extremal events: for insurance and finance},
  volume~33.
\newblock Springer Science \& Business Media, 2013.

\bibitem[Farkas et~al.(2021)Farkas, Lopez, and Thomas]{farkas2021cyber}
S.~Farkas, O.~Lopez, and M.~Thomas.
\newblock Cyber claim analysis using generalized pareto regression trees with
  applications to insurance.
\newblock \emph{Insurance: Mathematics and Economics}, 98:\penalty0 92--105,
  2021.

\bibitem[Gardes and Stupfler(2019)]{gardes2019integrated}
L.~Gardes and G.~Stupfler.
\newblock An integrated functional weissman estimator for conditional extreme
  quantiles.
\newblock \emph{REVSTAT-Statistical Journal}, 17\penalty0 (1):\penalty0
  109--144, 2019.

\bibitem[{Gey} and {Nedelec}(2005)]{gey}
S.~{Gey} and E.~{Nedelec}.
\newblock Model selection for cart regression trees.
\newblock \emph{IEEE Transactions on Information Theory}, 51\penalty0
  (2):\penalty0 658--670, 2005.
\newblock ISSN 1557-9654.

\bibitem[{González} et~al.(2015){González}, {Mira-McWilliams}, and
  {Juárez}]{cartindustry}
C.~{González}, J.~{Mira-McWilliams}, and I.~{Juárez}.
\newblock Important variable assessment and electricity price forecasting based
  on regression tree models: {C}lassification and regression trees, {B}agging
  and {R}andom {F}orests.
\newblock \emph{IET Generation, Transmission Distribution}, 9\penalty0
  (11):\penalty0 1120--1128, 2015.
\newblock ISSN 1751-8695.

\bibitem[Katz et~al.(2002)Katz, Parlange, and Naveau]{katz2002statistics}
R.~W. Katz, M.~B. Parlange, and P.~Naveau.
\newblock Statistics of extremes in hydrology.
\newblock \emph{Advances in water resources}, 25\penalty0 (8-12):\penalty0
  1287--1304, 2002.

\bibitem[Loh(2011)]{loh1}
W.-Y. Loh.
\newblock Classification and regression trees.
\newblock \emph{Wiley Interdisciplinary Reviews: Data Mining and Knowledge
  Discovery}, 1\penalty0 (1):\penalty0 14--23, 2011.

\bibitem[Loh(2014)]{loh2}
W.-Y. Loh.
\newblock Fifty years of classification and regression trees.
\newblock \emph{International Statistical Review}, 82\penalty0 (3):\penalty0
  329--348, 2014.

\bibitem[Lopez et~al.(2016)Lopez, Milhaud, and Thérond]{lopez2016}
O.~Lopez, X.~Milhaud, and P.-E. Thérond.
\newblock Tree-based censored regression with applications in insurance.
\newblock \emph{Electronic Journal of Statistics}, 10\penalty0 (2):\penalty0
  2685--2716, 2016.

\bibitem[Pickands(1975)]{pickands}
J.~Pickands.
\newblock Statistical inference using extreme order statistics.
\newblock \emph{Annals of Statistics}, 3\penalty0 (1):\penalty0 119--131, 1975.

\bibitem[Rodriguez-Galiano et~al.(2015)Rodriguez-Galiano, Sanchez-Castillo,
  Chica-Olmo, and Chica-Rivas]{cartgeology}
V.~Rodriguez-Galiano, M.~Sanchez-Castillo, M.~Chica-Olmo, and M.~Chica-Rivas.
\newblock Machine learning predictive models for mineral prospectivity: An
  evaluation of neural networks, random forest, regression trees and support
  vector machines.
\newblock \emph{Ore Geology Reviews}, 71:\penalty0 804--818, 2015.

\bibitem[Smith(1984)]{smith1984threshold}
R.~L. Smith.
\newblock Threshold methods for sample extremes.
\newblock In \emph{Statistical extremes and applications}, pages 621--638.
  Springer, 1984.

\bibitem[Stone(1974)]{stone1974cross}
M.~Stone.
\newblock Cross-validatory choice and assessment of statistical predictions.
\newblock \emph{Journal of the royal statistical society: Series B
  (Methodological)}, 36\penalty0 (2):\penalty0 111--133, 1974.

\bibitem[Su et~al.(2004)Su, Wang, and Fan]{SU_MLRT}
X.~Su, M.~Wang, and J.~Fan.
\newblock Maximum likelihood regression trees.
\newblock \emph{Journal of Computational and Graphical Statistics}, 13\penalty0
  (3):\penalty0 586--598, 2004.

\bibitem[Talagrand(1994)]{talagrand1994sharper}
M.~Talagrand.
\newblock Sharper bounds for gaussian and empirical processes.
\newblock \emph{The Annals of Probability}, pages 28--76, 1994.

\bibitem[van~der Vaart(1998)]{vandervaart}
A.~W. van~der Vaart.
\newblock \emph{Asymptotic statistics}, volume~3 of \emph{Cambridge Series in
  Statistical and Probabilistic Mathematics}.
\newblock Cambridge University Press, 1998.

\bibitem[Velthoen et~al.(2019)Velthoen, Cai, Jongbloed, and
  Schmeits]{velthoen2019improving}
J.~Velthoen, J.-J. Cai, G.~Jongbloed, and M.~Schmeits.
\newblock Improving precipitation forecasts using extreme quantile regression.
\newblock \emph{Extremes}, 22\penalty0 (4):\penalty0 599--622, 2019.

\bibitem[Velthoen et~al.(2021)Velthoen, Dombry, Cai, and
  Engelke]{velthoen2021gradient}
J.~Velthoen, C.~Dombry, J.-J. Cai, and S.~Engelke.
\newblock Gradient boosting for extreme quantile regression.
\newblock \emph{arXiv preprint arXiv:2103.00808}, 2021.

\bibitem[Wang et~al.(2012)Wang, Li, and He]{wang2012estimation}
H.~J. Wang, D.~Li, and X.~He.
\newblock Estimation of high conditional quantiles for heavy-tailed
  distributions.
\newblock \emph{Journal of the American Statistical Association}, 107\penalty0
  (500):\penalty0 1453--1464, 2012.

\end{thebibliography}
\end{document}